\newtheorem{theorem}{Theorem}[section]
\newtheorem{lemma}[theorem]{Lemma}
\newtheorem{conjecture}{Conjecture}
\theoremstyle{definition}
\newtheorem{example}{Example}
\newtheorem{remark}{Remark}
 \keywords{fear effect; finite time extinction; prey herd behavior; mutual interference; harvesting}
\begin{document}
\title[Non-smooth Model with Fear and Mutual Interference]{Fear-Driven extinction and (de)stabilization in a predator-prey model incorporating prey herd behavior and mutual interference}

\author[Antwi-Fordjour, Parshad, Thompson, Westaway]{}
\maketitle

\centerline{\scshape Kwadwo Antwi-Fordjour$^a$, Rana D. Parshad$^b$, Hannah E. Thompson$^c$}
\centerline{\scshape and Stephanie B.  Westaway$^a$}

\medskip
{\footnotesize
  \centerline{ a) Department of Mathematics and Computer Science,}
 \centerline{Samford University,}
 \centerline{Birmingham, AL 35229, USA}
\medskip
{\footnotesize
 \centerline{ b) Department of Mathematics,}
 \centerline{Iowa State University,}
\centerline{Ames, IA 50011, USA}
}
 \medskip
 {\footnotesize
  \centerline{ c) Department of Biological and Environmental Sciences,}
 \centerline{Samford University,}
 \centerline{Birmingham, AL 35229, USA}
}

\begin{abstract}
  A deterministic two-species predator-prey model with prey herd behavior is considered incorporating mutual interference and the effect of fear. 
We provide guidelines to the dynamical analysis of biologically feasible equilibrium points. We give conditions for the existence of some local and global bifurcations at the coexistence equilibrium. We also show that fear can induce extinction of the prey population from a coexistence zone in finite time. Our numerical simulations reveal that varying the strength of fear of predators with suitable choice of parameters can stabilize and destabilize the coexistence equilibrium solutions of the model. Additionally, we discuss the outcome of introducing a constant harvesting effort to the predator population in terms of changing the dynamics of the system, in particular, from finite time extinction to stable coexistence.  Furthermore,  we perform extensive numerical experiments to visualize the dynamical behavior of the model and substantiate the results we obtained. 
\end{abstract}

\section{Introduction}
\noindent 
Interactions among predator and prey population species are modeled by systems of differential equations, and the functional response (number of prey consumed per predator per unit of time) of predators toward the prey is one of the important ecological components, which provides a bedrock for predator-prey dynamics. Mathematical models incorporating a functional response originated from the investigation of chemical reactions and biological interactions  \cite{L25,V26}.  A large body of scholarly literature has shown that the functional response of predator can have profound impacts on the dynamics in natural predator-prey communities \cite{G34,B75,D75,H59,P15,H65,A02,UR97,KRM20}. Thus, to make mathematical models more realistic, an appropriate choice of functional response is needed. 

\emph{Herd behavior} refers to the phenomenon in which individuals in a group act collectively for a given period without coordination by a central authority \cite{B92,L06,RCF09}. This behavioral phenomenon has been widely researched across several disciplines. For example, in early economics, Veblen studied herd behavior in sudden shifts in consumer behavior, such as fads and fashions  \cite{V99}. Again, such phenomenon is seen in the Cobb-Douglas production function in the econometrics literature \cite{C28}. Its effect on population dynamics can be modeled using a functional response.   Prey herd behavior is a form of anti-predator behavior and provides protection for the prey species against predators \cite{APV11,VGR18}. One way of modeling prey heard behavior is by using the functional response, $\varphi(u)=cu^p$, with $0<p<1$ introduced by Rosenzweig \cite{RZ71}, where $u=u(t)$ is the population density of the herd and $c$ is the predation rate. Symbiotic, competition and predator-prey models in which the interaction terms use square root of the density of one population was considered by Ajraldi et al. \cite{APV11}. Furthermore,  Braza \cite{B12} studied a predator-prey model with the square root functional response $\varphi(u)=cu^{1/2},$ proposed by Gauss \cite{G34}, implying a strong herd behavior where the predator interacts with the prey along the outskirts of the herd. An ecoepidemic predator-prey model with feeding satiation showing prey herd behavior and abandoned infected prey was investigated by Kooi and Venturino \cite{K16}.  For other ways of modeling prey herd behavior, see \cite{H59,KRM20} and references therein. 
 
In addition, finite time extinction (FTE) of species exists in ecosystems and it is a significant issue for the management of natural resources \cite{RZ71,RKM20}. The functional response $\varphi(u)=cu^p$, for $p<1$ is non-smooth for $u=0$ and therefore possess interesting complex dynamics. This response function allows for the extinction of the prey  species in finite time, after which the predator population species exponentially decay to zero in infinite time \cite{AR95,SS07,B12,VGR18,BS19}. Non-smooth functional responses (or power incidence functions) have been analyzed in susceptible-infective models, where the host species can potentially go extinct in finite time \cite{FCGT18}.


The need to consider intra-specific behavioral interactions among predators when searching for prey is a vital question for ecologists and conservationist trying to ascertain the dynamics that inform ecosystem balance. These behavioral effects, also known as mutual/predator interference impede the predators' searching efficiency as the density of the predators increases \cite{H69,H71,H75,F79}. Several studies have concluded that mutual interference has a stabilizing effect on population dynamics, see \cite{A04} and references therein.    Freedman and Wolkowicz \cite{F86} investigated the survival or extinction of predators in a deterministic predator-prey system exhibiting prey group defence. In this study, they determined that extinction due to group defence combined with enrichment can be prevented by introducing mutual interference of predators.
 For further discussions of mutual interference with other types of functional response, see \cite{E85,U19,L09,K09,M13,W13}.  \par

 Recently, the non-consumptive effects of predation due to fear of predators has become the subject of interest for ecologists and mathematical biologists. Experiments on terrestrial vertebrates showed that the presence of a predator may play an important role by changing the behavior of the prey demography \cite{Zanette11}. Zanette et al. \cite{Zanette11} manipulated predation risk of song sparrows for the duration of an entire breeding season. This experiment was conducted to ascertain whether perceived predation risk alone could have an impact on the reproduction of the song sparrows. Suraci et al. \cite{S16} observed from experimentation that the effect of the manipulation of fear of large carnivores causes a tropic cascade. Hua et al. \cite{Hua14} studied how increased perception of predation risk to adults and offspring alters reproductive strategy and performance. In Wirsing and Ripple \cite{Wirsing11}, a comparison of shark and wolf research revealed similar behavioral responses by prey. Bauman et al. \cite{B19} investigated how the effects of fear associated with predator presence and habitat structure interact to change the removal of macroalgal biomass (i.e \emph{herbivory}) on coral reefs. They observed that the effects of fear due to the presence of predators were highest at low macroalgal density, but lost at higher densities due to increased background risk. ~\par
 
 Motivated by these ecological and biological findings, Wang et al. \cite{Wang16} introduced a mathematical model incorporating fear. The authors demonstrated that strong fear responses can have a stabilizing effect on a predator-prey model with Holling type II response by excluding periodic solutions to the system, resulting in a locally stable point of coexistence between the predator and prey populations \cite{Wang16}. Subsequent studies investigated the dynamics of fear in models incorporating hunting cooperation, prey refuge, Leslie-Gower type and a variety of functional responses, such as Beddington-DeAngelis, Holling type I,II, III and IV \cite{Kumar19, Zhang19, Pal19, Pal&Pal19, ST20}. A recent work considering the effect of mutual interference and fear on a predator-prey model with a Holling type I functional response established that the inclusion of mutual interference promotes system stability \cite{Xiao19}. ~\par

The qualitative effect of predator harvesting on the stability of the ecosystem has been investigated extensively \cite{B79,K06,X06,D98,L16,F19}. Chakraborty et al. \cite{C11} explored a mathematical study with biological ramifications of a predator-prey model with predator effort harvesting. Their result suggests that harvesting of predator may be one of several ways to observe coexistence of prey and predator population species in the laboratory study and possibly nature. The ratio-dependent predator-prey model where the predator population is harvested at catch-per-unit-effort hypothesis is investigated by Gao et al. \cite{G20}. Therein, they studied the temporal, spatial and spatiotemporal  rich dynamics due to the non-smoothness of the origin.

Our primary contributions in the present manuscript are:
\begin{enumerate}
\item We formulate a mathematical model (i.e. model \eqref{EquationMain}) incorporating the combined effects of fear of predator, prey herd behavior and mutual interference.
\item We study the effect of fear of predators on the dynamics of the model \eqref{EquationMain}.  We note that when there is no fear (i.e. $k=0$), there is some initial data that converges uniformly to a stable coexistence equilibrium point. With the introduction of fear of predator (i.e. $k>0$), that same initial data will converge to the predator axis in finite time. This phenomenon is shown analytically via Theorem \ref{thm:FDFTE} and presented numerically in Fig. \ref{fig:FDFTE}.
\item We analyze the impact of  predator harvesting on the dynamics of the modified Lotka-Volterra  model with fear effect. Our mathematical conjecture (see Conjecture \ref{thm:harvesting induced recovery}) and numerical simulation (see Fig. \ref{fig:Harvest_Recovery_Homoclinic}) reveal that, harvesting of predators can prevent finite time extinction of the prey species.
\end{enumerate}

This paper is arranged as follows: In Section \ref{Sect:model_formulation}, we propose a mathematical model of systems of differential equations to incorporate the combined effects of fear of predators, prey herd behavior and mutual interference. Guidelines to dynamical analysis are presented in Section \ref{Sec:dynamical_analysis}, where we investigated the possible existence of biologically feasible equilibrium points and the stability of the coexistence equilibrium. In Section \ref{Sect:bifurcation_analysis}, we derive conditions for the existence of  local and global bifurcations including saddle-node, Hopf, Bautin, and homoclinic bifurcations. Finite time extinction of the prey species driven by fear of predators were analyzed in Section \ref{Sec:FDFTE}. In Section \ref{Sect:Harvesting}, we investigate the effect of effort harvesting of predators.  To illustrate the feasibility of our mathematical analysis and conjectures, extensive numerical solutions are presented. The paper ends with concluding remarks in Section \ref{Sec:Conclusion}.

\section{The Mathematical Model}\label{Sect:model_formulation}
\noindent
Consider a modified Lotka-Volterra predator-prey model with predation intensity  and mutual interference of predators. Let $u(t)$ and $v(t)$, respectively, denote the prey and predator population densities at any time $t$. The model is given by the following systems of equations

\begin{equation}\label{eqn:Lotkta-Volterra}
\begin{cases}
\dfrac{du }{dt} &=a u  - b u^2 -  c u^p v^m,\qquad u(0)\geq 0, \\
\dfrac{dv }{dt} &=-d  v + e u^p v^m, \qquad \qquad v(0)\geq 0,
\end{cases}
\end{equation}
where $0<m,p\leq 1$. Let $m$ denote mutual interference parameter introduced by Hassell \cite{H71}, $1/p$ is the intensity of predation and $p$ determines the slope of the functional response at the origin,  $a$ denotes the birth rate of prey, $d$ denotes the death rate of predator, $b$ reflects the intraspecific competition of the prey, $c$ denotes the rate of predation, and $e$ measures efficiency of biomass conversion from prey to predator. When $p=m=1$, the model \eqref{eqn:Lotkta-Volterra} degenerates to the classical classic Lotka-Volterra model \cite{L25,V26}.  All parameters are assumed to be positive.
The underlying assumptions of the model \eqref{eqn:Lotkta-Volterra} are as follows:
\begin{enumerate}[label=(\roman*)]
 \item The first equation in model \eqref{eqn:Lotkta-Volterra} describes the change in prey population with respect to time, and it is separated into three parts, namely  birth rate, effect of the density of one species on the rate of growth of the other and functional response of the predator towards the prey.
 \item The second equation in model \eqref{eqn:Lotkta-Volterra} describes the change in predator population with respect to time and it is separated into two parts, namely death rate, $d$, the predators die out in the absence of its only food source, prey and biomass conversion from prey to predator with rate $e$. 
 \item The term $v^m$ models the intra-specific behavioral interactions among predators when searching for prey. For $m<1$, there is predator interference, where larger predator densities leads to less consumption per capita. Furthermore, this leads to a nonvertical predator nullcline. 
 \item The predator is consuming the prey with the functional response $\varphi(u)=u^p$,  for $0<p<1$ (see \cite{KRM20} for assumptions of $\varphi$). 
\end{enumerate}
 Model \eqref{eqn:Lotkta-Volterra} has been well investigated in infectious disease modeling, where $u$ represents the density of susceptible populations, $v$ represents the density of infective populations and the term $u^pv^m$ (i.e. modified Lotka-Volterra interaction term) represents power incidence function \cite{FCGT18}. As we have seen, the  model considered in \cite{FCGT18} reveals some significant and interesting results due to the power incidence function. A natural question that arises is: how does the effect of fear of predators affect the dynamical behaviors of the model \eqref{eqn:Lotkta-Volterra}? Does it stabilize, destabilize or have no influence?\\
Now, based on experimental evidence \cite{Zanette11}, we assume fear of predators decreases the birth rate of the prey species. To account for the decrease in the prey population due to fear of predators, the birth rate of the prey is multiplied by the term $\phi(k,v)=\frac{1}{1+kv}$, introduced by Wang et al. \cite{Wang16}, which is monotonically decreasing in both $k$ and $v$. Here $k$ denotes the strength of fear of predator. Biologically, it is appropriate to assume the following:
\begin{align*}
&\phi(k,0)=1,\quad \phi(0,v)=1,\quad \lim_{v\rightarrow\infty}\phi(k,v)=0,\\ 
&\quad \lim_{k\rightarrow\infty}\phi(k,v)=0, \quad
\dfrac{\partial\phi(k,v)}{\partial v}<0 ,\quad \dfrac{\partial\phi(k,v)}{\partial k}<0. 
\end{align*}
To the best of our knowledge, there does not exist any scholarly literature that investigates the combined influence of fear of predators on predator-prey interactions with prey herd behavior and mutual interference. This motivates us to formulate the following model
\begin{equation}\label{EquationMain}
\begin{cases}
\dfrac{du }{dt} &=\dfrac{a u}{1+k v}  - b u^2 -  c u^p v^m \coloneqq F(u,v), \qquad u(0)\geq 0,\\
\dfrac{dv }{dt} &=-d  v + e u^p v^m \coloneqq G(u,v), \qquad\qquad\qquad v(0)\geq 0.
\end{cases}
\end{equation}
 When $p=m=1$, we recover the results from Wang et al.  \cite{Wang16}. Moreover, for $p=1$ and $0<m\leq 1$, we recover results from Xiao and Li \cite{Xiao19}. Recently, Fakhry and Naji \cite{FN20} investigated the model \eqref{EquationMain}, where the fear function was multiplied to the logistic growth term i.e. $(au-bu^2)\phi(k,v)$ with square root functional response (i.e. $~p=0.5$) and no predator interference (i.e. $m=1$). Huang and Li \cite{HL20} disproved and also provided an alternative proof for some of the results obtained by Fakhry and Naji.  Our model provides a generalization of the models mentioned above, and we will focus on the case where $0<m,p<1$.

\section{Dynamical Analysis}\label{Sec:dynamical_analysis}
\noindent
The dynamical analysis of the model \eqref{EquationMain} is investigated in this section.
\begin{lemma}\label{lemma:Nonnegativity}
Consider the first quadrant $\mathbb{R}_+^2=\{(u,v):u\geq 0, v\geq 0\}$, then the solutions $(u(t),v(t))$ of model the \eqref{EquationMain} which initiate in $\mathbb{R}_{++}^2$ are nonnegative for all $t\geq 0$. Here, $\mathbb{R}_{++}^2=\{(u,v):u> 0, v> 0\}$. 
\end{lemma}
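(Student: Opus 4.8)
The plan is to run the classical ``first vanishing time'' argument, the only genuine wrinkle being that $F$ and $G$ fail to be Lipschitz on the coordinate axes when $p,m<1$. On the open quadrant $\mathbb{R}_{++}^2$, however, the maps $u\mapsto u^{p}$ and $v\mapsto v^{m}$ are smooth, so $F,G$ are locally Lipschitz there, and the solution issuing from $(u(0),v(0))\in\mathbb{R}_{++}^2$ is unique on a maximal interval $[0,T_{\max})$ and remains in $\mathbb{R}_{++}^2$ until (if ever) it reaches the boundary. I would therefore set $t^{\ast}=\inf\{t\in[0,T_{\max}):u(t)=0\text{ or }v(t)=0\}$, with the convention $t^{\ast}=T_{\max}$ if neither component ever vanishes; by continuity and $u(0),v(0)>0$ we have $t^{\ast}>0$, and on $[0,t^{\ast})$ both components are strictly positive.

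On $[0,t^{\ast})$ I would make nonnegativity explicit by writing the two equations in integrated form,
\begin{align*}
u(t)&=u(0)\exp\!\left(\int_0^t\left[\frac{a}{1+kv(s)}-bu(s)-cu(s)^{p-1}v(s)^{m}\right]ds\right),\\
v(t)&=v(0)\exp\!\left(\int_0^t\left[-d+eu(s)^{p}v(s)^{m-1}\right]ds\right),
\end{align*}
both integrands being continuous on $[0,t^{\ast})$ since $u,v>0$ there; as an exponential is always positive, this gives $u(t),v(t)>0$ on $[0,t^{\ast})$. More usefully, applying the integrating factor $e^{dt}$ directly to the second equation yields $\frac{d}{dt}\big(e^{dt}v(t)\big)=e^{dt}\,e\,u(t)^{p}v(t)^{m}\ge 0$, whence the quantitative bound $v(t)\ge v(0)e^{-dt}$, which by continuity persists at $t=t^{\ast}$.

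Finally I would rule out the solution ever leaving $\mathbb{R}_+^2$. If $t^{\ast}<T_{\max}$, then by continuity $u(t^{\ast})=0$ or $v(t^{\ast})=0$; but $v(t^{\ast})\ge v(0)e^{-dt^{\ast}}>0$, so necessarily $u(t^{\ast})=0$ while $v(t^{\ast})>0$. Since $0^{p}=0$ for $p>0$, every term of $F(0,v)$ vanishes, so $\{u=0\}$ is forward-invariant and the solution continues as $u\equiv 0$, $v(t)=v(t^{\ast})e^{-d(t-t^{\ast})}$, which stays in $\mathbb{R}_+^2$; hence $u\ge 0$ and $v>0$ for all subsequent time. (This case is not vacuous: it is precisely the finite-time extinction of the prey analysed in Section~\ref{Sec:FDFTE}.) I expect this last step --- continuation through the first instant at which the prey density reaches zero --- to be the only real obstacle, exactly because the non-Lipschitz term $cu^{p}v^{m}$ precludes a direct appeal to uniqueness there; it is handled by combining the invariance of $\{u=0\}$ with the lower bound $v(t)\ge v(0)e^{-dt}$. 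The passage from ``for all $t\in[0,T_{\max})$'' to ``for all $t\ge 0$'' then rests on the forward boundedness of trajectories, which the $-bu^{2}$ term supplies and which I would invoke from the accompanying boundedness estimate.
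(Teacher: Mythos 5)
Your argument is correct, and its core device---writing $u$ and $v$ in integrated exponential form while the trajectory remains in the open quadrant---is exactly the device the paper uses. The difference is one of completeness rather than of route: the paper simply displays the two exponential formulas (the one for $u$ already presupposes $u>0$, since it contains $u^{p-1}$), remarks that solutions exist but are non-unique, and then asserts that $v$ stays positive, without addressing the loss of Lipschitz continuity on the axes or the possibility that $u$ reaches zero in finite time---an event the paper itself exploits later in Theorem \ref{thm:FDFTE}. Your additions fill precisely these gaps: local Lipschitz continuity in $\mathbb{R}_{++}^2$ up to the first boundary-hitting time $t^{\ast}$, the quantitative bound $v(t)\ge v(0)e^{-dt}$ ruling out $v(t^{\ast})=0$, the continuation through $u(t^{\ast})=0$ via forward invariance of $\{u=0\}$ (where any continuation is automatically nonnegative, since $u^{p}$ is not even defined for $u<0$ with $0<p<1$), and the appeal to Lemma \ref{lemma:Boundedness_Dissipative} to pass from the maximal interval to all $t\ge 0$. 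So you reach the same conclusion by the same mechanism, but in a form that actually accommodates the non-smooth boundary dynamics the model is built to produce.
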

\begin{proof}
The right hand side of model \eqref{EquationMain} is continuous and locally non-smooth in $\mathbb{R}_{+}^2$. Also, the solution $(u(t),v(t))$ which initiate in $\mathbb{R}_{++}^2$ of model \eqref{EquationMain} exists and is non-unique.  From model \eqref{EquationMain}, we obtain
\begin{align*}
u(t)=&u(0)\exp\left[\displaystyle\int_0^t \left( \dfrac{a}{1+kv}  -b u - c u^{p-1}v^{m}  \right)ds \right]\geq 0, \\
v(t)=&v(0)\exp\left[\displaystyle\int_0^t \left( -d + e u^{p}v^{m-1}  \right)ds \right]\geq 0
\end{align*}
However,  $v$ stays positive for all $t>0$.
\end{proof}
\noindent
In theoretical biology and ecology, nonnegativity of the model \eqref{EquationMain} implies survival of the populations over some temporal domain.
\begin{lemma}\label{lemma:Boundedness_Dissipative}
The solutions $(u(t),v(t))$ of model the \eqref{EquationMain} which initiate in $\mathbb{R}_{++}^2$ are  uniformly bounded and dissipative. 
\end{lemma}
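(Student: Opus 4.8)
The plan is to derive a priori bounds from elementary differential inequalities together with scalar comparison; this route is robust to the non-uniqueness of solutions noted in Lemma~\ref{lemma:Nonnegativity}, since the majorizing functions I introduce satisfy \emph{autonomous, locally Lipschitz} scalar equations, so that the classical comparison lemma applies unchanged.

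First I would bound the prey component. On $\mathbb{R}_{++}^2$ one has $\tfrac{a}{1+kv}\le a$ and $cu^pv^m\ge 0$, hence $F(u,v)\le au-bu^2=u(a-bu)$, so every solution obeys $u'\le u(a-bu)$; comparing with the logistic equation $\bar u'=\bar u(a-b\bar u)$, $\bar u(0)=u(0)$, which has a unique solution, gives $u(t)\le\max\{u(0),a/b\}$ on the maximal interval of existence and $\limsup_{t\to\infty}u(t)\le a/b$. The predator component resists a direct estimate because of the growth term $eu^pv^m$, so I would instead work with the total biomass $W(t):=e\,u(t)+c\,v(t)$, chosen precisely so that the indefinite interaction terms cancel: $W'=eF+cG=\tfrac{eau}{1+kv}-ebu^2-cdv\le eau-ebu^2-cdv$. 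Adding $dW$ to both sides converts this into a linear differential inequality,
\begin{equation*}
W'+dW\;\le\;eu(a+d-bu)\;\le\;M,\qquad M:=\frac{e(a+d)^2}{4b},
\end{equation*}
the last bound being the maximum over $u\ge 0$ of the downward parabola $u\mapsto eu(a+d-bu)$. An integrating-factor argument then yields $W(t)\le W(0)e^{-dt}+\tfrac{M}{d}\bigl(1-e^{-dt}\bigr)\le\max\{W(0),\,M/d\}$, hence $\limsup_{t\to\infty}W(t)\le \tfrac{e(a+d)^2}{4bd}$. Since $u,v\ge 0$ this bounds $u$ and $v$ individually, giving uniform boundedness; and for every $\eta>0$ the set $\mathcal{B}_\eta=\bigl\{(u,v)\in\mathbb{R}_+^2:\ eu+cv\le \tfrac{e(a+d)^2}{4bd}+\eta\bigr\}$ is a bounded absorbing set, which is exactly the dissipativity claim.

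The point that needs care is the interaction with non-uniqueness: I would never compare with trajectories of the non-smooth system \eqref{EquationMain} itself, only with solutions of the scalar equations $\bar u'=\bar u(a-b\bar u)$ and $\bar W'=-d\bar W+M$, whose right-hand sides are Lipschitz and hence uniquely solvable, so the standard scalar comparison theorem legitimately applies to the absolutely continuous functions $u(t)$ and $W(t)$ satisfying the two inequalities above. It is also worth recording at the outset that the right-hand side of \eqref{EquationMain} is continuous on the closed quadrant $\mathbb{R}_+^2$ (valid because $0<p,m\le 1$), so that the a priori bound indeed forces the maximal interval of existence to be $[0,\infty)$ and rules out finite-time blow-up; the remaining arithmetic in the two comparison steps is routine.
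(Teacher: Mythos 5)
Your proof is correct, and it supplies exactly the standard argument the paper has in mind when it omits the proof of Lemma \ref{lemma:Boundedness_Dissipative} as routine: a logistic comparison for $u$, followed by the weighted total $W=eu+cv$, which cancels the sign-indefinite interaction terms and gives $W'+dW\le e(a+d)^2/(4b)$, hence uniform bounds and a bounded absorbing set. Your observation that one only ever compares with Lipschitz scalar majorants, so the non-uniqueness of solutions to \eqref{EquationMain} plays no role, is a worthwhile clarification and nothing further is needed.
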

The proof of Lemma \ref{lemma:Boundedness_Dissipative} is standard and therefore omitted in this work.\\

\noindent
The boundedness of a system limits total population growth of the interacting species, ensuring that neither population experiences exponential growth over a long time interval. As a condition of this property, total population values will not reach impracticable quantities in a period of time. Also,  in a dissipative model, the population of each species is bounded from above for all time. This guarantees that the individual populations of the predator or the prey do not exceed a finite upper limit.

\subsection{Existence of Equilibria}
The  model \eqref{EquationMain} contains one trivial equilibrium point $E_0(0,0)$, and an axial equilibrium point, $E_1(a/b,0)$.
 The existence of a coexistence equilibrium point is ascertained by finding the intersection(s) of the prey and predator nullclines.\\
  First, the prey nullcline is determined by the equation

\begin{align}
g_1(u,v)=\dfrac{a}{1+kv}  -b u - c u^{p-1}v^{m}  &= 0.  \label{int1} 
\end{align}
 If $u=0$, we obtain
$$0=\dfrac{a}{1+kv}\coloneqq \sigma(v). $$ 
But $v=0$ implies $\sigma(v)=a\neq 0$, since $a$ is a positive constant. Furthermore,
$$\sigma^{\prime}(v)=-\dfrac{ak}{(1+kv)^2}<0,$$ 
and thus, there exist some $v>0$ such that $\sigma(v)=0$.
 Also, if $v=0$ in equation \eqref{int1}, then $u=\frac{a}{b}>0$.
Moreover, we assume that 
\begin{align}\label{cond_on_trace}
b>c(1-p)u^{p-2}v^m
\end{align}
  and observe that $u>0,v>0$ and 

\begin{align*}
\dfrac{dv}{du}=-\left[\dfrac{b-c(1-p)u^{p-2}v^m}{cmu^{p-1}v^{m-1}-\sigma^{\prime}(v)}\right]<0.
\end{align*}
%
Now the graph of the prey nullcline is concave and intersects the prey axis at $u=0$ and $u=\frac{a}{b}$. \\
Additionally, the predator nullcline is determined by the equation 
\begin{align} \label{int2}
g_2(u,v)=-d + e u^{p}v^{m-1}  &= 0.  
\end{align}
Solving for $v$ in equation \eqref{int2} yields
\begin{align}\label{int3}
v=\left[\dfrac{e}{d}u^{p}\right]^{1/(1-m)}
\end{align}
Clearly, the point $(0,0)$ lies on the predator nullcline. By computing the first and second derivatives with respect to $u$, we obtain
$$\dfrac{dv}{du}=\dfrac{pv}{(1-m)u}>0,$$
\begin{align}\label{concavity}
\dfrac{d^2v}{du^2}=\dfrac{p(p+m-1)v}{[(1-m)u]^2}\;(> 0\;\text{or}=0\; \text{or}\; < 0).
\end{align}
From equation \eqref{concavity}, the sign of the second derivative depends on $p+m-1$.

Hence, the predator nullcline goes through $(0,0)$, and as $u$ increases, the predator nullcline increases monotonically. Now by the intermediate value theorem, the prey and predator nullclines will intersect in $\mathbb{R}_{++}^2$ to produce a unique (i.e. $E_2(u^*,v^*)$) or two (i.e. $E_2^i(u_i^*,v_i^*)$, for $i=1,2$ and $0<u_1^*<u_2^*<\frac{a}{b}$) coexistence equilibrium points.

\begin{remark}\label{remark: equilibrium points}
Indeed in  model \eqref{EquationMain}, the coexistence equilibria  are not analytically accessible.   Numerical simulations are provided as guidelines in Fig. \ref{fig:1and2_intEquilibrium}, to show the existence of a unique coexistence  equilibrium point for $p+m\geq1$ and two coexistence  equilibria  for $p+m<1$.
\end{remark}
\begin{figure}[!htb]
\begin{center}
\subfigure[]{
    \includegraphics[scale=.47]{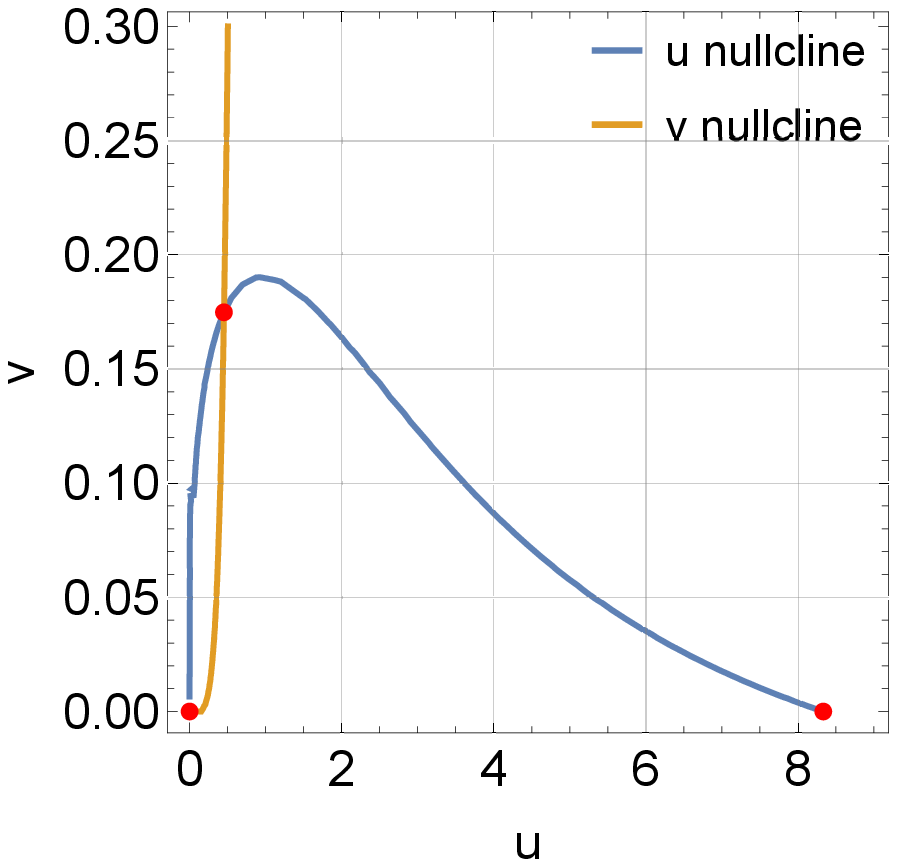}}
\subfigure[]{
    \includegraphics[scale=.43]{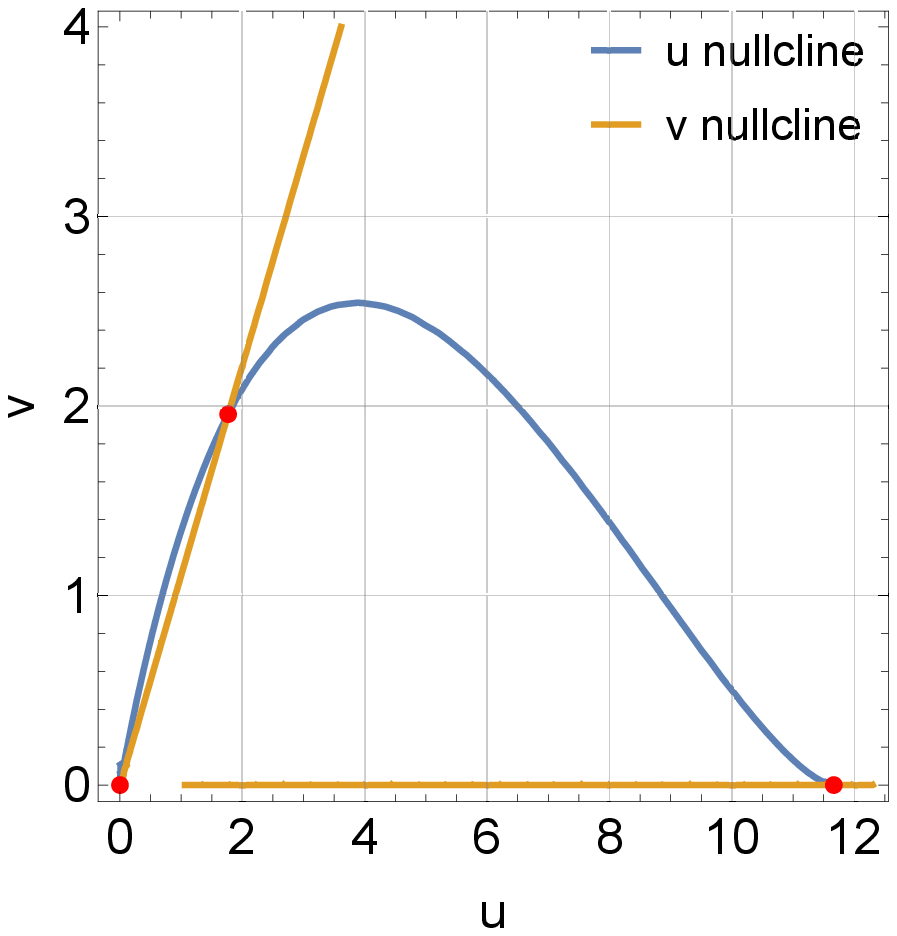}}    
\subfigure[]{    
    \includegraphics[scale=.45]{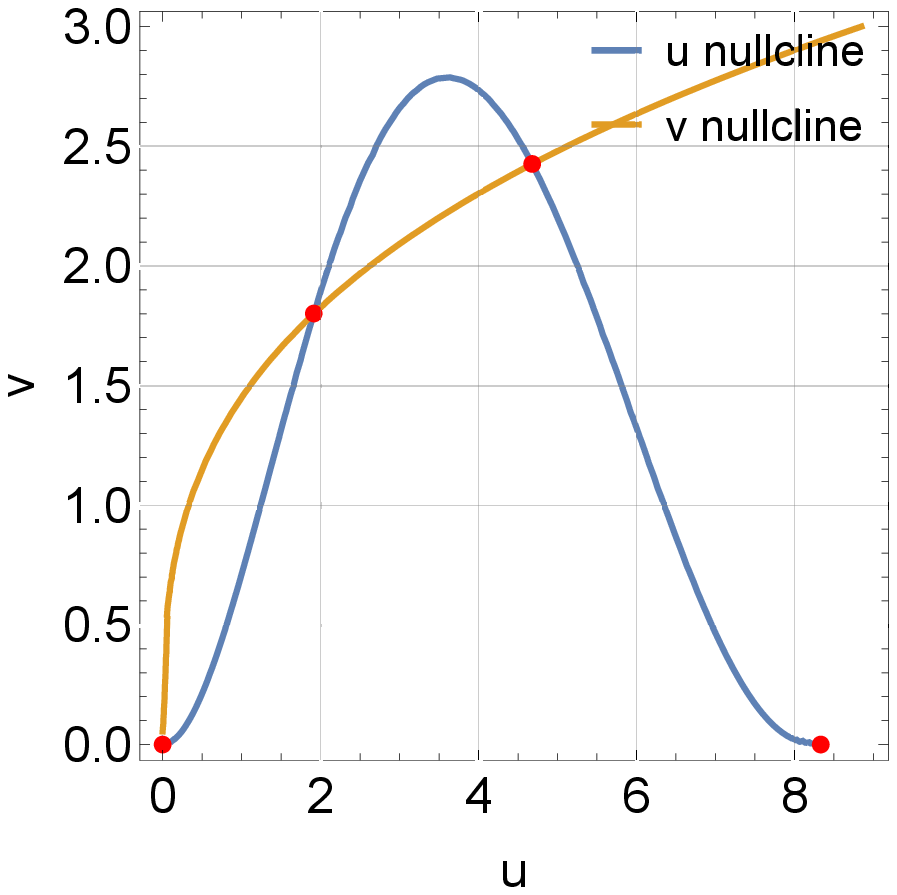}}
\end{center}
\caption{Phase plane portraits depicting the predator and prey nullclines in the model \eqref{EquationMain} (a) unique coexistence equilibrium point for $p=0.5$ and $m=0.9$, thus $p+m>1$ (b) unique coexistence equilibrium point for $p=0.4$ and $m=0.6$, thus $p+m=1$   (c) two coexistence equilibria for  $p=0.2$ and $m=0.4$, thus $p+m<1$. Solid red circles represent equilibrium points.}
\label{fig:1and2_intEquilibrium}
\end{figure}
\subsection{Stability Analysis at a Coexistence Equilibrium Point}
We discuss in this subsection the local stability at any  coexistence equilibrium point. 
\begin{theorem}\label{localstability}
Consider the model given by \eqref{EquationMain}.
\begin{itemize}
\item[(a)] For  $p+m\geq 1$, there exists a unique  coexistence equilibrium point $E_2(u^*, v^*)$ which is locally asymptotically stable (LAS) by the Routh-Hurwitz criterion.
\item[(b)] For  $p+m<1$, either there exist two coexistence equilibrium points or none. However, if there exist two coexistence equilibrium points, then $E^1_2(u_1^*, v_1^*)$ is a saddle and  $E^2_2(u_2^*, v_2^*)$ is LAS. 
\end{itemize}
\end{theorem}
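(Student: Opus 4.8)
The plan is to linearize \eqref{EquationMain} at a coexistence equilibrium and read off stability from the Routh--Hurwitz criterion, so everything reduces to pinning down the signs of the trace and determinant of the Jacobian
\[
J(u,v)=\begin{pmatrix} F_u & F_v \\ G_u & G_v\end{pmatrix}.
\]
First I would compute the four entries and simplify them at a coexistence point $(u^*,v^*)$ using the nullcline identities $\frac{a}{1+kv^*}=bu^*+c(u^*)^{p-1}(v^*)^{m}$ and $d=e(u^*)^{p}(v^*)^{m-1}$ coming from $g_1(u^*,v^*)=g_2(u^*,v^*)=0$. This collapses them to $G_v\big|_*=-(1-m)d<0$, $G_u\big|_*=ep(u^*)^{p-1}(v^*)^{m}>0$, $F_v\big|_*=-\frac{aku^*}{(1+kv^*)^2}-cm(u^*)^{p}(v^*)^{m-1}<0$, and $F_u\big|_*=-bu^*+c(1-p)(u^*)^{p-1}(v^*)^{m}$, whose sign is precisely what condition \eqref{cond_on_trace} governs (multiply \eqref{cond_on_trace} by $u^*>0$). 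In particular $\mathrm{tr}\,J\big|_*=F_u\big|_*-(1-m)d$, so $\mathrm{tr}\,J\big|_*<0$ whenever $F_u\big|_*<0$, i.e.\ whenever \eqref{cond_on_trace} holds.

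For the determinant it is cleanest to factor through the nullcline slopes: writing $F=u\,g_1$, $G=v\,g_2$ and using $g_1=g_2=0$ at the equilibrium gives
\[
\det J\big|_* = u^*v^*\Big(\tfrac{\partial g_1}{\partial u}\tfrac{\partial g_2}{\partial v}-\tfrac{\partial g_1}{\partial v}\tfrac{\partial g_2}{\partial u}\Big)=u^*v^*\,\tfrac{\partial g_1}{\partial v}\tfrac{\partial g_2}{\partial v}\Big(\tfrac{dv}{du}\Big|_{g_2}-\tfrac{dv}{du}\Big|_{g_1}\Big),
\]
and since $\tfrac{\partial g_1}{\partial v}<0$ and $\tfrac{\partial g_2}{\partial v}=e(m-1)u^{p}v^{m-2}<0$, the sign of $\det J\big|_*$ equals the sign of (predator nullcline slope) $-$ (prey nullcline slope) at the crossing. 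Recall from the excerpt that the predator nullcline is strictly increasing everywhere, while the prey nullcline is decreasing \emph{exactly} where \eqref{cond_on_trace} holds (that is precisely the condition $\tfrac{dv}{du}\big|_{g_1}<0$).

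For part (a), when $p+m\ge1$ the nullcline geometry established above forces the single coexistence point $E_2$ onto the descending portion of the prey nullcline, where \eqref{cond_on_trace} holds; hence $F_u\big|_*<0$, so $\mathrm{tr}\,J\big|_*<0$, and the prey slope is negative while the predator slope is positive, so $\det J\big|_*>0$. Routh--Hurwitz then yields LAS. For part (b), when two coexistence points exist the same argument applies verbatim at $E^2_2$ (larger $u$-coordinate), which again lies on the descending branch: $\mathrm{tr}\,J<0$, $\det J>0$, LAS. At $E^1_2$ (smaller $u$-coordinate) the equilibrium lies on the ascending branch of the prey nullcline near $u=0$, where $u^{p-2}\to\infty$ makes \eqref{cond_on_trace} fail and the prey nullcline rises and, at that first crossing, overtakes the predator nullcline from below; thus the prey slope exceeds the predator slope there, forcing $\det J\big|_{E^1_2}<0$, so the two eigenvalues are real of opposite sign and $E^1_2$ is a saddle (no trace information is even needed here).

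The main obstacle is not the linear algebra but the geometric bookkeeping: one must use the two-branch (``dome'') structure of the prey nullcline joining the vicinity of $(0,0)$ to $(a/b,0)$ to certify that the single crossing in case (a), and the outer crossing $E^2_2$ in case (b), fall on the branch where \eqref{cond_on_trace} holds, while the inner crossing $E^1_2$ falls on the branch where it is reversed, and in that last case to compare the two nullcline slopes sharply enough to conclude $\det J<0$ rather than merely $\det J\ne0$. Making that comparison rigorous --- e.g.\ by analyzing the leading-order behavior $v\sim(a/c)^{1/m}u^{(1-p)/m}$ of the inner prey branch against $v=(e/d)^{1/(1-m)}u^{p/(1-m)}$ for the predator nullcline and checking the exponent inequality $(1-p)/m>p/(1-m)\iff p+m<1$ --- rather than reading it off Figure \ref{fig:1and2_intEquilibrium}, is the real work. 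For part (a) alone one may instead simply take \eqref{cond_on_trace} as a standing hypothesis and verify $\mathrm{tr}\,J<0$, $\det J>0$ by direct substitution, which is the most economical route.
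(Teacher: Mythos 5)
Your computations (the simplified Jacobian entries at the coexistence point, $J_{22}=-d(1-m)$, and the factorization $\det\mathbf{J}=u^*v^*\,\frac{\partial g_1}{\partial v}\frac{\partial g_2}{\partial v}\bigl(\frac{dv}{du}\big|_{g_2}-\frac{dv}{du}\big|_{g_1}\bigr)$) are exactly the paper's route, and your saddle argument at $E^1_2$ (prey nullcline crossing the predator nullcline from below at the first intersection, hence $\det\mathbf{J}<0$) coincides with the paper's case \eqref{case3}. The exponent comparison $(1-p)/m \gtrless p/(1-m)\iff p+m\lessgtr 1$ near the origin is also the right way to make Remark \ref{remark: equilibrium points} precise.

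However, there is a genuine gap in how you dispose of the trace. You claim that for $p+m\ge 1$ the unique intersection is \emph{forced} onto the descending branch of the prey nullcline (so that \eqref{cond_on_trace} holds and $\mathrm{tr}\,\mathbf{J}<0$ automatically), and likewise that $E^2_2$ always lies on the descending branch when $p+m<1$. Neither claim follows from the exponent comparison: $p+m\ge1$ only determines which nullcline lies on top near $u=0$ (hence the parity of the number of crossings); it says nothing about whether the crossing occurs where the prey "dome" is still rising or already falling, i.e.\ about the sign of $J_{11}=-u^*\bigl[b-c(1-p)u^{*p-2}v^{*m}\bigr]$. Indeed the crossing can land on the ascending branch with $J_{11}>0$ large enough that $\mathrm{tr}\,\mathbf{J}>0$: the paper's own Hopf-bifurcation example (Theorem \ref{bifurcationMain}, Fig.~\ref{fig:Hopf_Main}(a), with $p=0.5$, $m=0.9$, so $p+m>1$) has the unique coexistence equilibrium \emph{unstable} for $k<k_h$, which directly contradicts your unconditional conclusion in (a). The paper's proof avoids this by splitting into the sign cases \eqref{case1}--\eqref{case5} and concluding LAS only under the extra hypothesis $b>c(1-p)u^{*p-2}v^{*m}$ (i.e.\ \eqref{cond_on_trace}) or, in the $J_{11}>0$ cases, only \emph{if} $\mathrm{tr}\,\mathbf{J}<0$; your closing remark about taking \eqref{cond_on_trace} as a standing hypothesis is in fact what the paper does, and is the version you should adopt, since the stronger geometric claim is false as stated. (Your determinant analysis, by contrast, is sound: at the relevant crossings the predator slope exceeds the prey slope, so $\det\mathbf{J}>0$ there, matching the paper.)
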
 

\begin{proof}
The linearized model \eqref{EquationMain} at any coexistence equilibrium point $(u^*,v^*)$ is given by the Jacobian Matrix $\bf{J}$
\begin{align}\label{Jacobian}
 \bf{J} &= 
     \begin{bmatrix}
     J_{11} &  J_{12}\\
      J_{21} &  J_{22}
     \end{bmatrix},
\end{align}
where\\
\begin{align*}
J_{11}&= \dfrac{a}{1+kv^*}-2bu^*-cpu^{*p-1}v^{*m}\\
&=-bu^*+c(1-p)u^{*p-1}v^{*m} \\
&=-u^*\left[b-c(1-p)u^{*p-2}v^{*m}\right]= u^*\frac{\partial g_1}{\partial u^*}.
\end{align*}
\begin{align*}
J_{12}&=-\dfrac{kau^*}{(1+kv^*)^2}-cmu^{*p}v^{*m-1}=u^*\frac{\partial g_1}{\partial v^*} < 0,\\
J_{21}&= epu^{*p-1}v^{*m}= v^*\frac{\partial g_2}{\partial u^*}  > 0, \\
J_{22}&=  -d+meu^{*p}v^{*m-1}=-d(1-m)= v^*\frac{\partial g_2}{\partial v^*} <0. 
\end{align*}
The characteristic equation at the coexistence equilibrium is 
 $$\eta^2 - \operatorname{tr} \,({\bf{J}}) \eta + \det \,({\bf{J}}) =0,$$
where 
\begin{align*}
 \operatorname{tr} \,({\bf{J}})=J_{11}+J_{22}=u^*\frac{\partial g_1}{\partial u^*}+v^*\frac{\partial g_2}{\partial v^*} ,   
\end{align*}

and
\begin{align*}
 \det \,({\bf{J}})=J_{11}J_{22}-J_{12}J_{21}
 =u^*v^*\frac{\partial g_1}{\partial u^*}\frac{\partial g_2}{\partial v^*} - u^*v^*\frac{\partial g_1}{\partial v^*} \frac{\partial g_2}{\partial u^*}.   
\end{align*}
By using implicit function theorem as used in \cite{S21}, we obtain
\begin{align*}
 \det \,({\bf{J}})=u^*v^*\frac{\partial g_1}{\partial v^*}\frac{\partial g_2}{\partial v^*}\left(\frac{dv^{*(g_2)}}{du^*}-\frac{dv^{*(g_1)}}{du^*} \right),
\end{align*}
where $\frac{dv^{*(g_2)}}{du^*}$ and $\frac{dv^{*(g_1)}}{du^*}$ are the slopes of the tangents of the  predator and prey nullclines at $E_2$ respectively.
\begin{itemize}
\item[(a)] 
Now we assume $p+m\geq 1$. The two possibilities for the sign of the Jacobian matrix at $E_2$ are:

\begin{align}\label{case1}
 \text{sign}(\bf{J}) &= 
     \begin{bmatrix}
   - & -\\
     +& -
     \end{bmatrix}     
\end{align}
or
\begin{align}\label{case2}
 \text{sign}(\bf{J}) &= 
     \begin{bmatrix}
   + & -\\
     +& -
     \end{bmatrix}.
\end{align}

If $b>c(1-p)u^{*p-2}v^{*m}$, then  $\det \,({\bf{J}})>0$  and  $\operatorname{tr} \,({\bf{J}})<0$. Thus in \eqref{case1}, $E_2$ is LAS by  Routh-Hurwitz criterion. In \eqref{case2}, the $\det \,({\bf{J}})>0$ since  
$\frac{dv^{*(g_2)}}{du^*}>\frac{dv^{*(g_1)}}{du^*}$. This is clearly seen in Fig. \ref{fig:1and2_intEquilibrium}[(a) and (b)]. Thus, $E_2$ is LAS if the  $\operatorname{tr} \,({\bf{J}})<0$.

\item[(b)] Assume $p+m<1$ and there exist two coexistence equilibrium points. The sign of the Jacobian matrix at $E^1_2$ is given by

\begin{align}\label{case3}
 \text{sign}(\bf{J}) &= 
     \begin{bmatrix}
   +& -\\
     +& -
     \end{bmatrix}     
\end{align}
and $E^2_2$ is
\begin{align}\label{case4}
 \text{sign}(\bf{J}) &= 
     \begin{bmatrix}
   -& -\\
     +& -
     \end{bmatrix}
\end{align}
or
\begin{align}\label{case5}
 \text{sign}(\bf{J}) &= 
     \begin{bmatrix}
   +& -\\
     +& -
     \end{bmatrix}
\end{align}

In \eqref{case3}, $b<c(1-p)u^{*p-2}v^{*m}$ and $\frac{dv^{*(g_2)}}{du^*}<\frac{dv^{*(g_1)}}{du^*}$, hence $E^1_2$ is a saddle point since the $\det \,({\bf{J}})<0$. This is evident in Fig. \ref{fig:1and2_intEquilibrium}(c). In \eqref{case4}, $b>c(1-p)u^{*p-2}v^{*m}$, thus $E^2_2$ is LAS since  $\det \,({\bf{J}})>0$  and  $\operatorname{tr} \,({\bf{J}})<0$. In \eqref{case5}, when $b<c(1-p)u^{*p-2}v^{*m}$ then  $\det \,({\bf{J}})>0$ if $\frac{dv^{*(g_2)}}{du^*}>\frac{dv^{*(g_1)}}{du^*}$. Therefore, $E^2_2$ is LAS if $\operatorname{tr} \,({\bf{J}})<0$.
\end{itemize}
\end{proof}
\begin{remark}\label{remark: on singularity of E0 and E1}
Since $0<p,m<1$, there is singularity in the Jacobian at $E_0$ and $E_1$. Hence we cannot analyze the stability of $E_0$ and $E_1$ by linearizing the  model \eqref{EquationMain}. We assume $E_0$ is a saddle point and we shall describe the behavior of model \eqref{EquationMain} by using the manifolds produced from $E_0$.  In the phase portrait, all trajectories below the stable manifold are attracted towards the stable coexistence equilibrium point and those above the stable manifold go the predator axis i.e.  the prey population goes extinct in finite time and consequently the extinction of predator population asymptotically. 
Any of the coexistence equilibria cannot be globally asymptotically stable (GAS) if LAS due to the singularity at $E_0$.
\end{remark}

\section{Bifurcation Analysis}\label{Sect:bifurcation_analysis}
\subsection{Local Bifurcation}
In this subsection, we investigate the qualitative changes in the dynamical behavior of model \eqref{EquationMain} under the effect of varying the strength of the fear of predator $k$. The conditions and restrictions for the occurrence of saddle-node and Hopf bifurcations are derived analytically and their classification is of co-dimension $1$ bifurcations. Additionally, we present numerically the two-parameter projection of the Hopf-bifurcation diagrams of the model \eqref{EquationMain}.

\subsubsection{Saddle-node bifurcation}
Saddle-node bifurcation occurs when shifting a parameter value causes two equilibria of contrasting stability to collide and mutually disappear, forming an instantaneous saddle-node at the point of their collision.
In the next theorem, we show that using the strength of fear as a bifurcation parameter, the model \eqref{EquationMain} satisfies the conditions for saddle-node bifurcation.

\begin{theorem}\label{thm:SN-k}
Model \eqref{EquationMain} admits a saddle-node bifurcation around $E_2$ at $k_s$ when the model parameter values satisfy the conditions $\frac{dv^{*(g_2)}}{du^*}=\frac{dv^{*(g_1)}}{du^*}$ and $\operatorname{tr} \,({\bf{J}})<0$.
\end{theorem}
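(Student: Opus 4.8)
The plan is to apply Sotomayor's theorem on saddle-node bifurcations, taking the fear strength $k$ as the bifurcation parameter. Write \eqref{EquationMain} as $\dot{\mathbf{X}}=\mathbf{f}(\mathbf{X},k)$ with $\mathbf{X}=(u,v)^{T}$ and $\mathbf{f}=(F,G)^{T}$. Since $E_2=(u^*,v^*)$ lies in $\mathbb{R}^2_{++}$, the field $\mathbf{f}$ is $C^{\infty}$ near $E_2$, so the non-smoothness at the origin is irrelevant here. The bifurcation value $k_s$ is identified as the parameter value at which the ($k$-dependent) prey nullcline \eqref{int1} becomes tangent to the ($k$-independent) predator nullcline \eqref{int2}, i.e. $\frac{dv^{*(g_2)}}{du^*}=\frac{dv^{*(g_1)}}{du^*}$; by the determinant identity $\det(\mathbf{J})=u^*v^*\frac{\partial g_1}{\partial v^*}\frac{\partial g_2}{\partial v^*}\big(\frac{dv^{*(g_2)}}{du^*}-\frac{dv^{*(g_1)}}{du^*}\big)$ obtained in the proof of Theorem \ref{localstability}, this tangency is equivalent to $\det(\mathbf{J})|_{k=k_s}=0$ (note $\frac{\partial g_1}{\partial v^*}\neq 0$ since $J_{12}<0$, and $\frac{\partial g_2}{\partial v^*}\neq 0$ since $J_{22}<0$). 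Hence, from the characteristic equation $\eta^2-\operatorname{tr}(\mathbf{J})\eta+\det(\mathbf{J})=0$ together with the hypothesis $\operatorname{tr}(\mathbf{J})<0$, the Jacobian at $(E_2,k_s)$ has a simple zero eigenvalue $\eta_1=0$ and a second eigenvalue $\eta_2=\operatorname{tr}(\mathbf{J})<0$, which is the eigenvalue configuration required by Sotomayor's theorem.

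Next I would exhibit the null eigenvectors. One may take $V=(J_{12},-J_{11})^{T}=(-J_{22},J_{21})^{T}$ spanning $\ker(\mathbf{J})$ and $W=(-J_{22},J_{12})^{T}$ spanning $\ker(\mathbf{J}^{T})$ (the two forms of $V$ agree because $\det(\mathbf{J})=0$). The first transversality condition of Sotomayor's theorem is $W^{T}\mathbf{f}_{k}(E_2,k_s)\neq0$. Because $k$ enters only the prey equation, through the fear factor, we have $\mathbf{f}_{k}(E_2,k_s)=\big(-\frac{a u^* v^*}{(1+k_s v^*)^2},\,0\big)^{T}$, so $W^{T}\mathbf{f}_{k}=-\frac{a u^* v^*}{(1+k_s v^*)^2}\,(-J_{22})=-\frac{a u^* v^*}{(1+k_s v^*)^2}\,d(1-m)\neq0$, using $J_{22}=-d(1-m)$ and $0<m<1$; this settles the first condition.

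The remaining and genuinely computational step is the second-order non-degeneracy condition $W^{T}\big[D^2\mathbf{f}(E_2,k_s)(V,V)\big]\neq0$, where $D^2\mathbf{f}(V,V)=\big(F_{uu}V_1^2+2F_{uv}V_1V_2+F_{vv}V_2^2,\ G_{uu}V_1^2+2G_{uv}V_1V_2+G_{vv}V_2^2\big)^{T}$. Here I would compute the six second partials at $E_2$ --- for the predator equation these are compact, e.g. $G_{uu}=ep(p-1)u^{*p-2}v^{*m-1}$, $G_{uv}=ep(m-1)u^{*p-1}v^{*m-2}$, $G_{vv}=e(m-1)(m-2)u^{*p}v^{*m-3}$, with the analogous but longer expressions for $F$ because of the fear term --- substitute $V_1,V_2$, and collect the result into a single scalar $\Delta$. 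The assertion is that $\Delta\neq0$ under the stated hypotheses, and this is where the main obstacle lies: because $E_2$ is not available in closed form (Remark \ref{remark: equilibrium points}), the non-vanishing must be argued symbolically, using only the nullcline relations \eqref{int1}--\eqref{int3} and the established signs $J_{12}<0$, $J_{21}>0$, $J_{22}<0$. In practice one either shows that $\Delta$ is a sign-definite combination of these quantities, or records $\Delta\neq0$ as a mild genericity restriction that excludes a codimension-one cusp. Once $\Delta\neq0$ is in hand, Sotomayor's theorem immediately yields that \eqref{EquationMain} undergoes a saddle-node bifurcation at $E_2$ as $k$ passes through $k_s$, which is the claim; consistently with part (b) of Theorem \ref{localstability}, near $k_s$ the saddle $E^1_2$ and the stable node $E^2_2$ collide and annihilate.
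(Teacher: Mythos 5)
Your proposal is correct and follows essentially the same route as the paper's proof: Sotomayor's theorem with $k$ as the bifurcation parameter, the tangency condition forcing $\det(\mathbf{J})=0$ (hence a simple zero eigenvalue, with $\operatorname{tr}(\mathbf{J})<0$ giving the other eigenvalue negative), explicit null eigenvectors, and the transversality condition computed from the $k$-derivative of the fear term. The only difference is that you candidly flag the second-order nondegeneracy condition $W^{T}[D^2\mathbf{f}(V,V)]\neq 0$ as an unverified genericity assumption, whereas the paper simply asserts it without computation, so your treatment is, if anything, slightly more careful.
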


\begin{proof}
In order to verify the conditions for the existence of saddle-node bifurcation, we employ Sotomayor's theorem \cite{Perko13} at $k=k_s$.  At $k=k_s$, we obtain that
 $\frac{dv^{*(g_2)}}{du^*}=\frac{dv^{*(g_1)}}{du^*}$ and $\operatorname{tr} \,({\bf{J}})<0$, which shows that the Jacobian $\,({\bf{J}})$ has a zero eigenvalue. Let $W$ and $Z$ be the eigenvectors corresponding to the zero eigenvalue of the matrix $\,{\bf{J}}$ and $\,{\bf{J}}^T$ respectively. Here, $W=(w_1,w_2)^T$ and $Z=(z_1,z_2)^T$, where $w_1=-\frac{J_{12}w_2}{J_{11}}$, $z_1=-\frac{J_{21}z_2}{J_{11}}$ and $w_2,z_2 \in \mathbb{R} \setminus \{0\}$.

Furthermore, let $H=(F,G)^T$ and $\tilde{M}=(u^*,v^*)^T$, where $F, G$ are defined in \eqref{EquationMain}.
Thus
\begin{align*}
Z^T H_{k}(\tilde{M},k_s) = (z_1,z_2)\left(-\dfrac{au^*v^*}{(1+k_sv^*)^2},0\right)^T=-\dfrac{au^*v^*}{(1+k_sv^*)^2}z_1 \neq 0,
\end{align*}
and
\begin{align*}
Z^T \left[D^2 H(\tilde{M},k_s)(W,W)\right] \neq 0.
\end{align*}
Therefore model \eqref{EquationMain} admits a saddle-node bifurcation when $k=k_s$.
\end{proof}

\begin{remark}
Additionally, the model \eqref{EquationMain} undergoes saddle-node bifurcation around $E_2$ with respect to the following parameters, $d,b,a,c$ and $e$. See Fig. \ref{fig:SN-Appendix} in the appendix for numerical verification.
\end{remark}

\begin{figure}[!htb]
\begin{center}
\subfigure[]{
    \includegraphics[scale=.263]{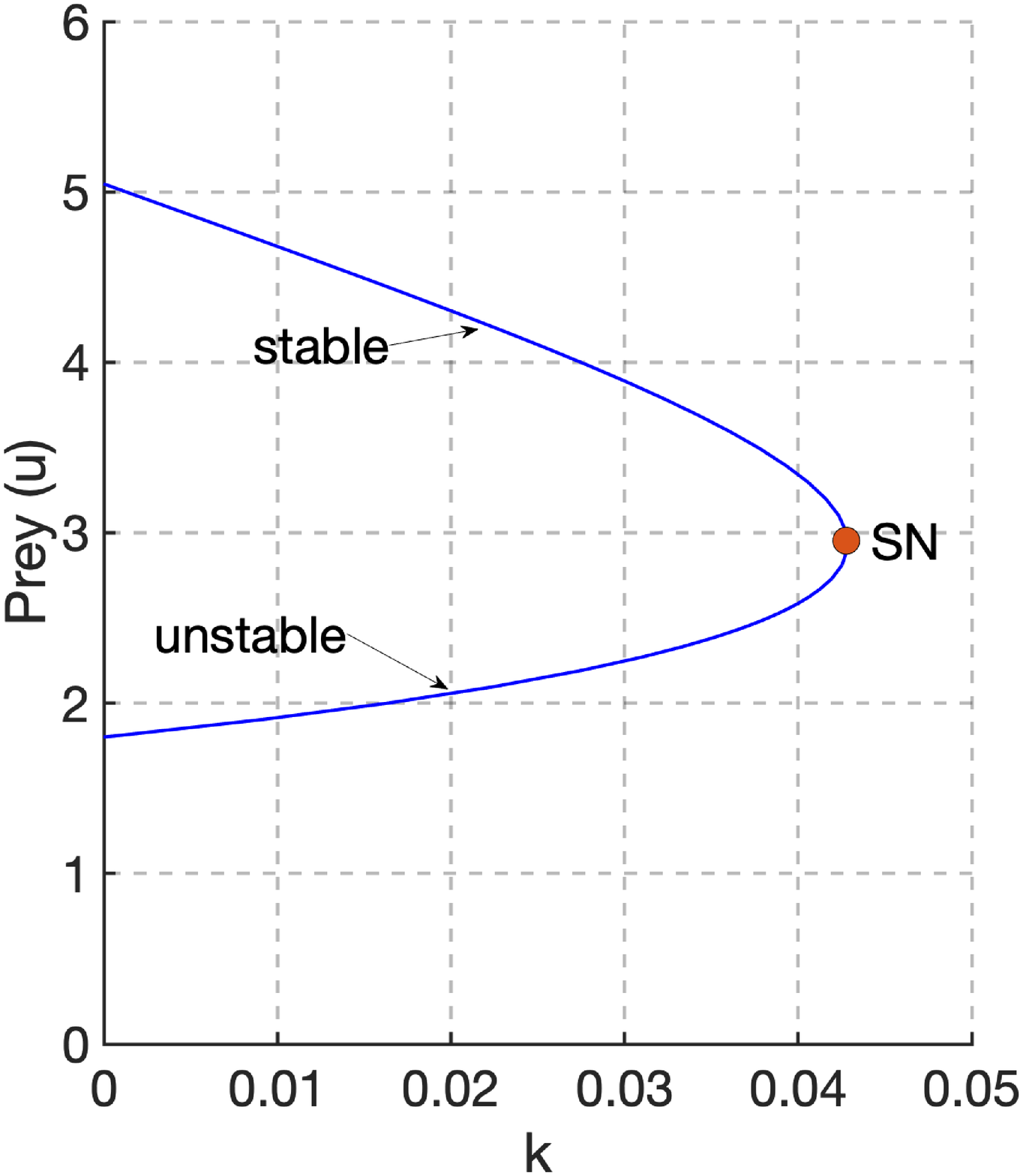}}
\subfigure[]{    
    \includegraphics[scale=.263]{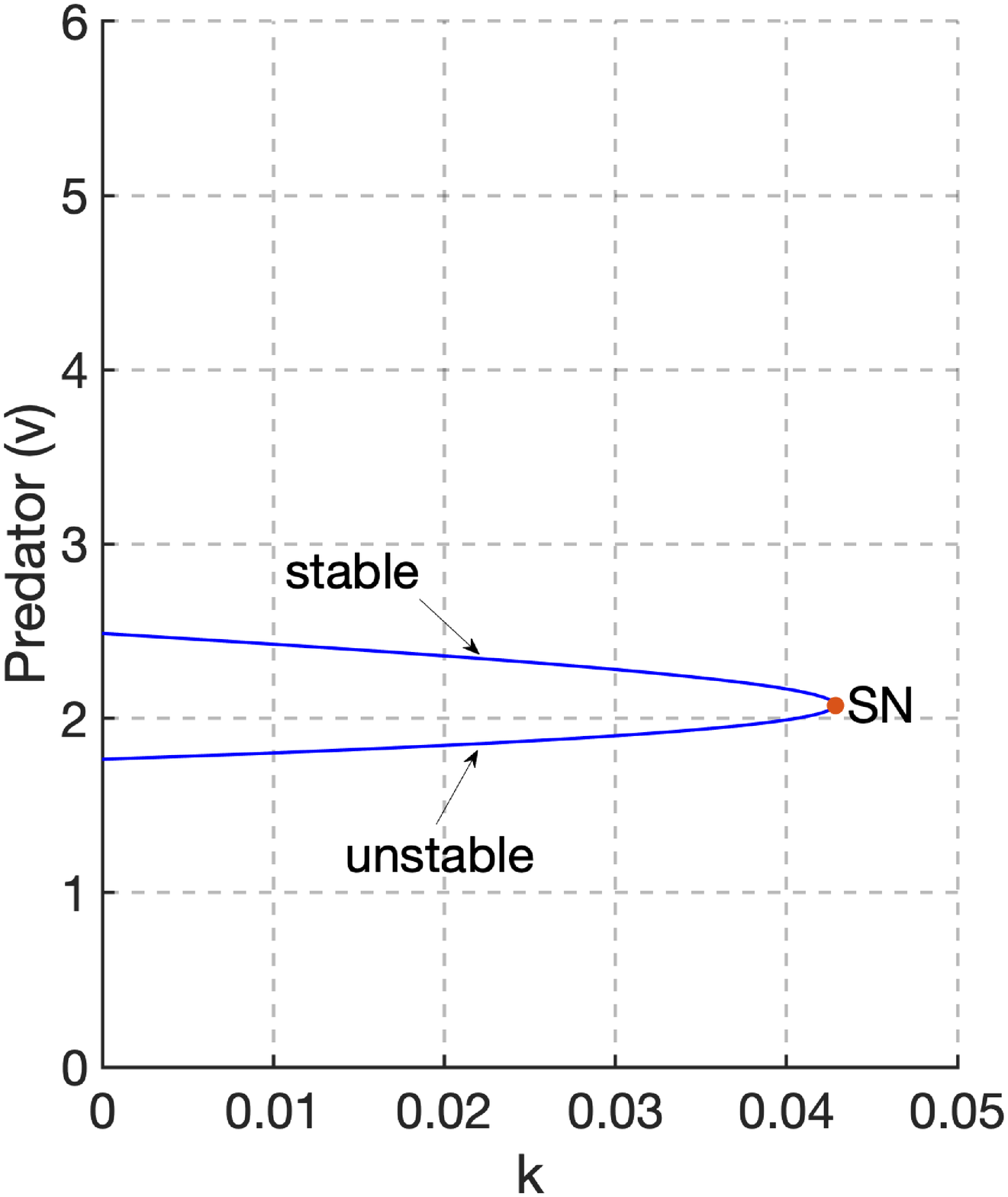}}
\end{center}
 \caption{Figures illustrating saddle-node bifurcation of the model \eqref{EquationMain}  at $k=k_s=0.042859$. Here, $m=0.4,~a=2.5,~b=0.3,~p=0.2,~d=2,~c=2.5,~e=2.5$. (SN: Saddle-node point)  }
      \label{fig:SN_Main}
\end{figure}


\subsubsection{Hopf-bifurcation}
Similar to a saddle-node bifurcation, a Hopf-bifurcation describes a local change in the stability of an interior equilibrium point due to an alteration of a parameter. However, for a Hopf-bifurcation, varying the bifurcation parameter does not annihilate or create new equilibrium points. Rather, at the point where system stability shifts (i.e. Hopf point) – a stable or unstable periodic orbit develops.
The conditions for the existence of Hopf-bifurcation of the model \eqref{EquationMain} is derived in the theorem below.

\begin{theorem}\label{bifurcationMain}
Model \eqref{EquationMain} experiences Hopf-bifurcation around the coexistence equilibrium point $E_2$ at $k=k_h$, where
\begin{align*}
k_h=\frac{1}{v^*}\left[\frac{a}{2bu^*+cpu^{*p-1}v^{*m}+d(1-m)}-1\right],
\end{align*}
 when the following conditions are satisfied:\\
 $S(k)=0 $, $M(k)>0$, and
 $\dfrac{d}{dk}\left. \text{Re} [\eta_i (k) \right]|_{k=k_h}\neq 0$ for $ i=1,2$.
\end{theorem}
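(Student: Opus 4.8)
The plan is to verify, at the parameter value $k=k_h$, the three hypotheses of the classical Poincaré--Andronov--Hopf theorem (as in \cite{Perko13}). A preliminary observation is in order: although the right-hand side of \eqref{EquationMain} is only $C^1$ globally (because of the non-smoothness of $u^p$ and $v^m$ at the coordinate axes), the coexistence equilibrium $E_2=(u^*,v^*)$ lies in the open quadrant $\mathbb{R}_{++}^2$, where $F$ and $G$ are $C^\infty$; hence the smooth Hopf machinery is available on a neighbourhood of $E_2$. Write $S(k):=\operatorname{tr}(\mathbf{J})$ and $M(k):=\det(\mathbf{J})$ for the trace and determinant of the Jacobian \eqref{Jacobian} evaluated at $E_2$, so that the characteristic equation reads $\eta^2-S(k)\eta+M(k)=0$; a purely imaginary eigenvalue pair occurs exactly when $S(k)=0$ and $M(k)>0$.

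First I would locate $k_h$. Using the diagonal entries computed in the proof of Theorem \ref{localstability},
$$S(k)=J_{11}+J_{22}=\frac{a}{1+kv^*}-2bu^{*}-cpu^{*p-1}v^{*m}-d(1-m),$$
so solving $S(k_h)=0$ yields $1+k_h v^*=\dfrac{a}{2bu^{*}+cpu^{*p-1}v^{*m}+d(1-m)}$, which rearranges to precisely the stated expression for $k_h$. At this value the characteristic equation collapses to $\eta^2+M(k_h)=0$; since $M(k_h)>0$ is part of the hypothesis (and holds, exactly as in Theorem \ref{localstability}, under \eqref{cond_on_trace}, equivalently under the ordering of the nullcline tangent slopes at $E_2$), the eigenvalues are $\eta_{1,2}(k_h)=\pm i\sqrt{M(k_h)}$, a nonzero imaginary pair.

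Next I would establish transversality. By continuity of $S$ and $M$ in $k$, the discriminant $S(k)^2-4M(k)$ remains negative on a neighbourhood of $k_h$, so there the eigenvalues form a complex-conjugate pair $\eta_{1,2}(k)=\tfrac12 S(k)\pm\tfrac{i}{2}\sqrt{4M(k)-S(k)^2}$ with $\operatorname{Re}\,\eta_{1,2}(k)=\tfrac12 S(k)$. Hence $\frac{d}{dk}\operatorname{Re}[\eta_i(k)]\big|_{k=k_h}=\tfrac12 S'(k_h)$, which is nonzero precisely under the assumed condition. With all three hypotheses of the Hopf theorem in force, one concludes that a one-parameter family of small-amplitude periodic orbits bifurcates from $E_2$ as $k$ crosses $k_h$.

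The one genuinely delicate point — and the main obstacle, were one to try to make the transversality inequality explicit — is that $(u^*,v^*)$ itself depends on $k$, so $S(k)$ is not simply $a/(1+kv^*)$ minus a constant; computing $S'(k_h)$ from scratch would require implicitly differentiating the nullcline system \eqref{int1}--\eqref{int2} to obtain $du^*/dk$ and $dv^*/dk$, and by Remark \ref{remark: equilibrium points} the coexistence equilibrium is not analytically accessible. Since the theorem adopts the transversality inequality as a standing assumption, this computation is bypassed; it is also why the direction of the bifurcation (the sign of the first Lyapunov coefficient) and the exact location of $k_h$ are deferred to the subsequent Bautin-bifurcation analysis and to the numerical experiments.
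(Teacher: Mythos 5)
Your proposal is correct and follows essentially the same route as the paper: solve $S(k_h)=0$ for $k_h$, note that $M(k_h)>0$ gives the purely imaginary pair $\pm i\sqrt{M(k_h)}$, and verify transversality via $\operatorname{Re}\eta_{1,2}(k)=\tfrac12 S(k)$ before invoking the Hopf theorem. The only divergence is at the transversality step: the paper computes the derivative explicitly, obtaining $\tfrac{d}{dk}\operatorname{Re}[\eta_i(k)]\big|_{k=k_h}=-\tfrac12\,\frac{av^*}{(1+k_h v^*)^2}\neq 0$ by differentiating only the explicit $k$ in $a/(1+kv^*)$, i.e.\ treating $(u^*,v^*)$ as fixed, whereas you leave the inequality as a standing hypothesis and correctly flag that a complete computation would need $du^*/dk$, $dv^*/dk$ from implicit differentiation of the nullclines (the prey nullcline \eqref{int1} does depend on $k$); your caution here is legitimate and, if anything, identifies a point the paper's explicit formula glosses over.
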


\begin{proof}
Using the strength of fear as a bifurcation parameter, consider the Jacobian matrix \eqref{Jacobian} around the coexistence equilibrium $E_2$. The characteristic equation at $E_2$ is given by
\begin{align}\label{character_bifur_main}
\eta^2 - S(k)\eta + M(k) = 0,
\end{align}
where
$S = \operatorname{tr} \,({\bf{J}}) = J_{11} + J_{22}$ and
$M =  \det \,({\bf{J}}) =  J_{11}  J_{22}- J_{12} J_{21}.$
The zeros of the equation \eqref{character_bifur_main} are
\begin{align}\label{cond:HB}
\eta_{1,2}=\xi (k) \pm i \mu (k).
\end{align}
At $k=k_h$, $S(k)=0$ implies
\begin{align*}
\dfrac{a}{1+k_hv^*}-2bu^*-pcu^{*p-1}v^{*m}-d(1-m)&=0.
\end{align*}
The characteristic equation \eqref{character_bifur_main} becomes
 \begin{align}\label{charac_reduced_bifur_main}
 \eta^2 + M(k)=0,
 \end{align}
at $k=k_h$.  Solving for the zeros of equation \eqref{charac_reduced_bifur_main} yields $\eta_{1,2}=\pm i\sqrt{M}$. Thus, a pair of purely imaginary eigenvalues. Furthermore, we substantiate  the transversality condition. For any $k$ in the neighborhood of $k_h$ in \eqref{cond:HB}, let $\xi(k)=\text{Re} \left[\eta_i (k) \right]=\frac{1}{2}S(k)$ and $\mu(k)=\sqrt{M(k)-\frac{[S(k)]^2}{4}}$. Thus, 
\begin{align*}
\dfrac{d}{dk}\left. \text{Re} [\eta_i (k) \right]|_{k=k_h}=\dfrac{1}{2}\dfrac{d}{dk}S(k)|_{k=k_h}=-\dfrac{1}{2}\dfrac{av^*}{(1+k_hv^*)^2}\neq 0.
\end{align*}
 Therefore, by the Hopf-bifurcation Theorem \cite{Murray93}, the model \eqref{EquationMain} experiences a Hopf-bifurcation around $E_2$ at $k=k_h$.
\end{proof}


\subsubsection{Direction of Hopf-bifurcation}
We investigate the stability and direction of the periodic cycles  emitted via Hopf-bifurcation around the coexistence  equilibrium point  by computing the first Lyapunov coefficient \cite{Perko13}. We first translate the coexistence equilibrium $E_2$ of the model \eqref{EquationMain} to the origin by using the transformation $x=u-u^*$ and $y=v-v^*$. Now, the model \eqref{EquationMain} becomes
\begin{equation*}
\begin{cases}
\dfrac{dx }{dt} &=\dfrac{a (x+u^*)}{1+k (y+v^*)}  - b  (x+u^*)^2 -  c  (x+u^*)^p (y+v^*)^m ,\\
\dfrac{dy }{dt} &=-d  (y+v^*) + e  (x+u^*)^p (y+v^*)^m.
\end{cases}
\end{equation*}
Applying Taylor series expansion at $(x,y)=(0,0)$ up to third order, we obtain the following planar analytic model
\begin{equation}\label{Lyapunov_eqn}
\begin{cases}
 \dot{x} &=a_{10}x +  a_{01}y +  a_{20}x^2+  a_{11}xy+  a_{02}y^2 +  a_{30}x^3+  a_{21}x^2 y \\
 &+  a_{12}x y^2+  a_{03}y^3,  \\
\dot{y} &= b_{10}x+  b_{01}y+  b_{20}x^2+  b_{11}xy+  b_{02}y^2 +  b_{30}x^3+  b_{21}x^2 y\\
&+  b_{12}x y^2+  b_{03}y^3,
\end{cases}
\end{equation}
where
\begin{align*}
a_{10}=&\dfrac{a}{1+k_hv^*}-2bu^*-cpu^{*p-1}v^{*m},\\
a_{01}=&-\dfrac{k_hau^*}{(1+k_hv^*)^2}-cmu^{*p}v^{*m-1},\\
a_{20}=&- b-\frac{c}{2} (p-1) p u^{*p-2} v^{*m},\\
a_{11}=&-\frac{a k_h}{(1+k_h v^*)^2}-c m pu^{*p-1}  v^{*m-1},\\
a_{02}=&\frac{a k_h^2 u^*}{(1+k_h v^*)^3}-\frac{c}{2} (m-1) mu^{*p}  v^{*m-2},\\
a_{30}=& -\frac{c}{6} (p-2) (p-1) p u^{*p-3}  v^{*m},\\
a_{21}=& -\frac{c}{2} m (p-1) p  u^{*p-2} v^{*m-1},\\
a_{12}=&\frac{a k_h^2}{(1+k_h v^*)^3}-\frac{c}{2} (m-1) m p u^{*p-1} v^{*m-2},\\
a_{03}=& -\frac{a k_h^3 u^*}{(1+k_h v^*)^4}-\frac{c}{6} (m-2) (m-1) m u^{*p} v^{*m-3},\\
b_{10}=& epu^{*p-1}v^{*m},\\
b_{01}=& -d+emu^{*p}v^{*m-1},\\
b_{20}=& \frac{e}{2} (p-1) p u^{*p-2} v^{*m},\\
b_{11}=& e m p u^{*p-1} v^{*m-1},\\
b_{02}=& \frac{e}{2} (m-1) m u^{*p} v^{*m-2},\\
b_{30}=& \frac{e}{6} (p-2) (p-1) p u^{*p-3}  v^{*m},\\
b_{21}=& \frac{e}{2} m (p-1) p  u^{*p-2} v^{*m-1},\\
b_{12}=& \frac{e}{2} (m-1) m p  u^{*p-1} v^{*m-2},\\
b_{03}=& \frac{e}{6} (m-2) (m-1) m u^{*p} v^{*m-3}.\\
\end{align*}
Since $a_{10}, a_{01}, b_{10}$ and $b_{01}$ are the components of the Jacobian matrix $\bf{J}$ evaluated at the coexistence equilibrium point $E_2$ at $k=k_h$, then $S=a_{10}+b_{01}=0$ and $M=a_{10}b_{01}-a_{01}b_{10}>0$.\\
The first Lyapunov coefficient $L$ \cite{Perko13} is computed by the formula
\begin{align*}
L =\frac{-3\pi}{2a_{01}M^{3/2}} \{ [ a_{10}b_{10}(a^2_{11}+a_{11}b_{02}+a_{02}b_{11}) + a_{10}a_{01}(b^2_{11}+a_{20}b_{11}+a_{11}b_{02}) \\
+b^2_{10}(a_{11}a_{02}+2a_{02}b_{02}) - 2a_{10}b_{10}(b^2_{02}-a_{20}a_{02}) - 2a_{10}a_{01}(a^2_{20}-b_{20}b_{02}) \\
-a^2_{01}(2a_{20}b_{20}+b_{11}b_{20})+(a_{01}b_{10}-2a^2_{10})(b_{11}b_{02}-a_{11}a_{20})] \\
-(a^2_{10}+a_{01}b_{10})\left[ 3(b_{10}b_{03}-a_{01}a_{30})+2a_{10}(a_{21}+b_{12})+(b_{10}a_{12}-a_{01}b_{21}) \right] \}.
\end{align*}
Now, if $L<0$, then the Hopf-bifurcation is \emph{supercritical} and \emph{subcritical} if $L>0$.

\begin{figure}[!htb]
\begin{center}
\subfigure[]{
    \includegraphics[scale=.26]{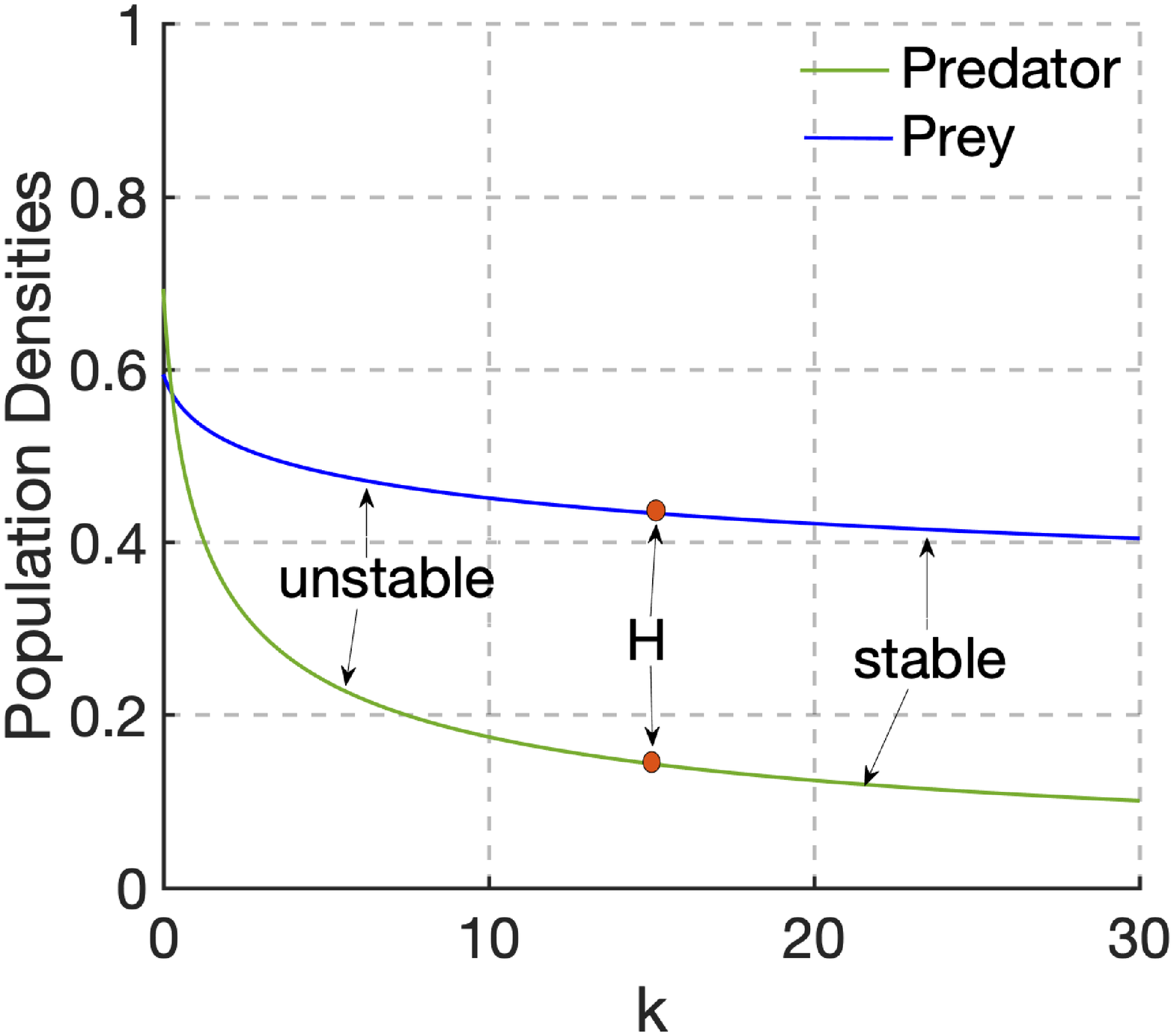}}
\subfigure[]{    
    \includegraphics[scale=.26]{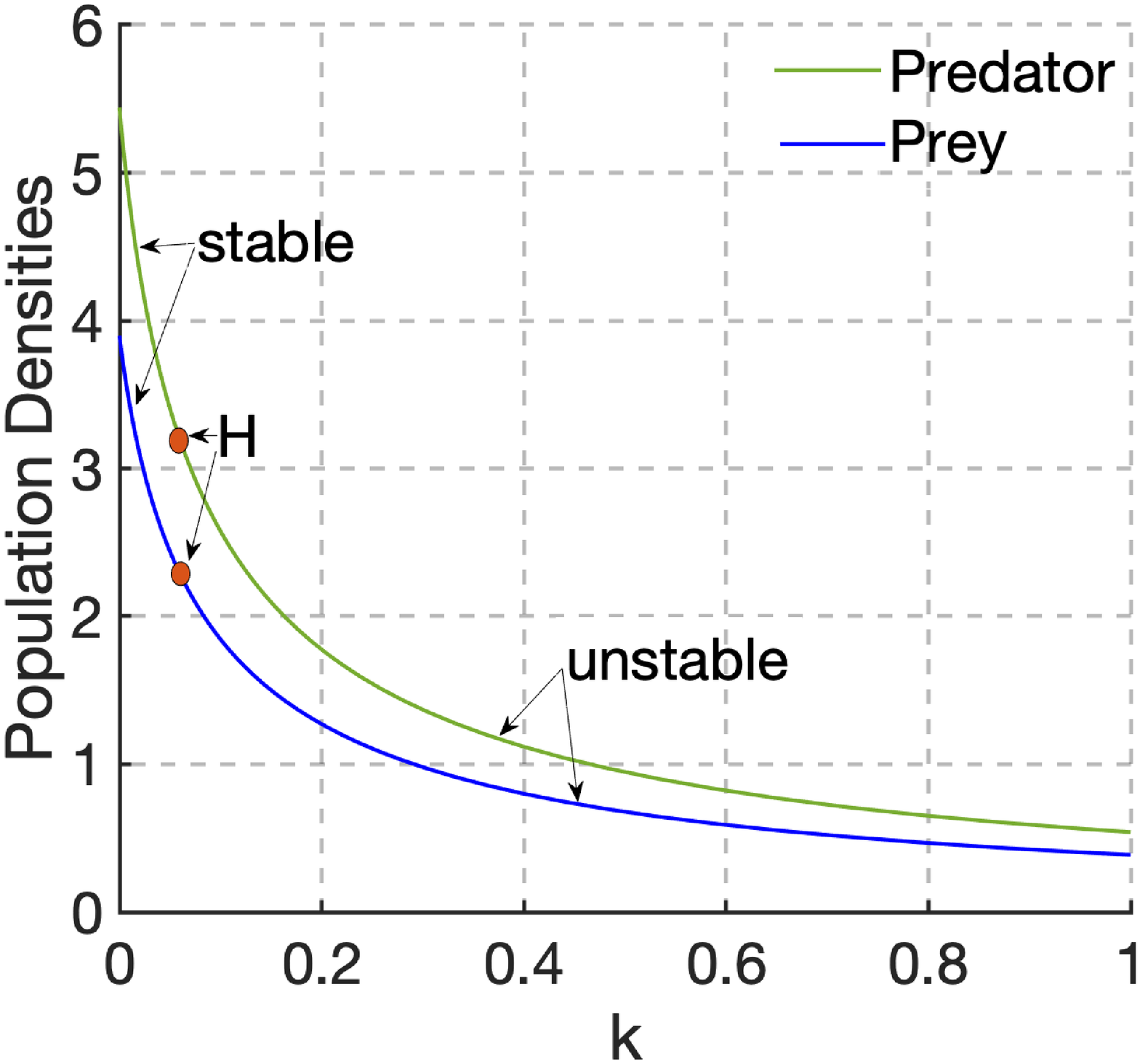}}
\end{center}
 \caption{Bifurcation diagrams of the model \eqref{EquationMain} illustrating change in stability with $k$ as the bifurcating parameter. 
 (a) Hopf point at $k=k_h=15.093353$ with 
first Lyapunov coefficient$=1.83219e^{-01}$, here $m=0.9,~a=2.5,~k=10,~b=0.3,~p=0.5,~d=2,~c=2.5,~e=2.5$ (b) Hopf point at $k=k_h=0.061382$  with 
first Lyapunov coefficient$=1.76388e^{-02}$, here $m=0.6,~a=2,~b=0.2,~p=0.4,~d=0.9,~c=1,~e=0.8.$ (H: Hopf point)}
      \label{fig:Hopf_Main}
\end{figure}

\begin{remark}\label{remark:stability}
From the numerical simulations in Fig. \ref{fig:Hopf_Main}, we show that the fear of predator has an effect in altering the stability of the coexistence equilibrium solution $E_2(u^*,v^*)$ of the model \eqref{EquationMain} via Hopf-bifurcation. In Fig. \ref{fig:Hopf_Main}(a), the coexistence equilibrium solution changes from an unstable zone to a stable zone around $E_2(0.43392,0.14327)$ as the strength of fear of predator $k$ crosses the Hopf point at $k_h=15.093353$. We used MATCONT \cite{Matcont} to generate the bifurcation diagrams and obtained $L=1.83219e^{-01}>0$, hence subcritical Hopf-bifurcation. Furthermore, in Fig. \ref{fig:Hopf_Main}(b), the coexistence equilibrium solution changes from a stable zone to an unstable zone around $E_2(2.26530,3.16304)$ as the parameter $k$ crosses the Hopf point at $k_h=0.061382$ with $L=1.76388e^{-02}>0$, hence subcritical Hopf-bifurcation. In summary, we conclude here that with appropriate parameters the effect of fear of predator can have a stabilizing and destabilizing effect on the coexistence equilibrium solutions of the model \eqref{EquationMain}.
\end{remark}

\begin{figure}[!htb]
\begin{center}
\subfigure[]{
    \includegraphics[scale=.170]{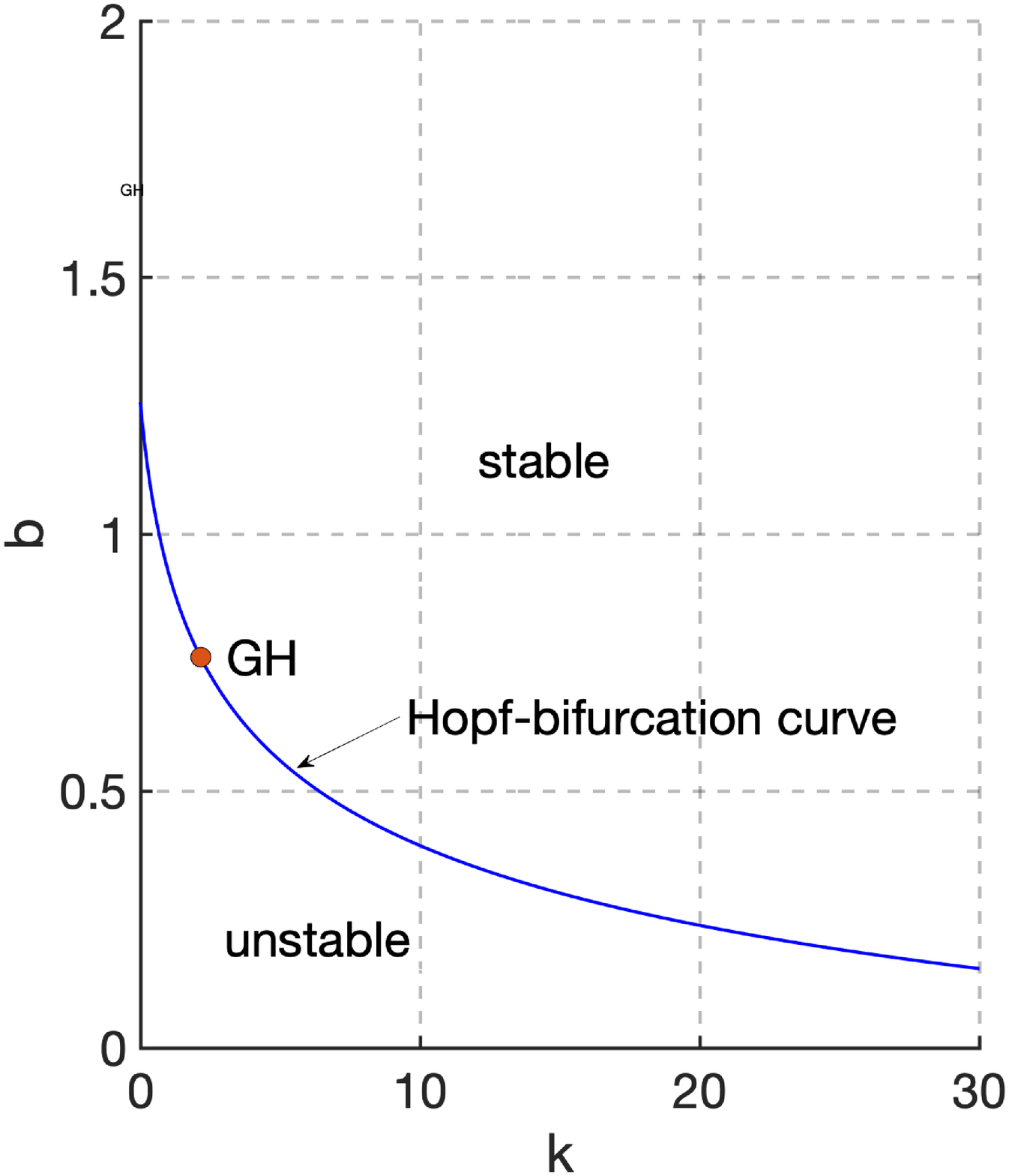}}
\subfigure[]{    
    \includegraphics[scale=.170]{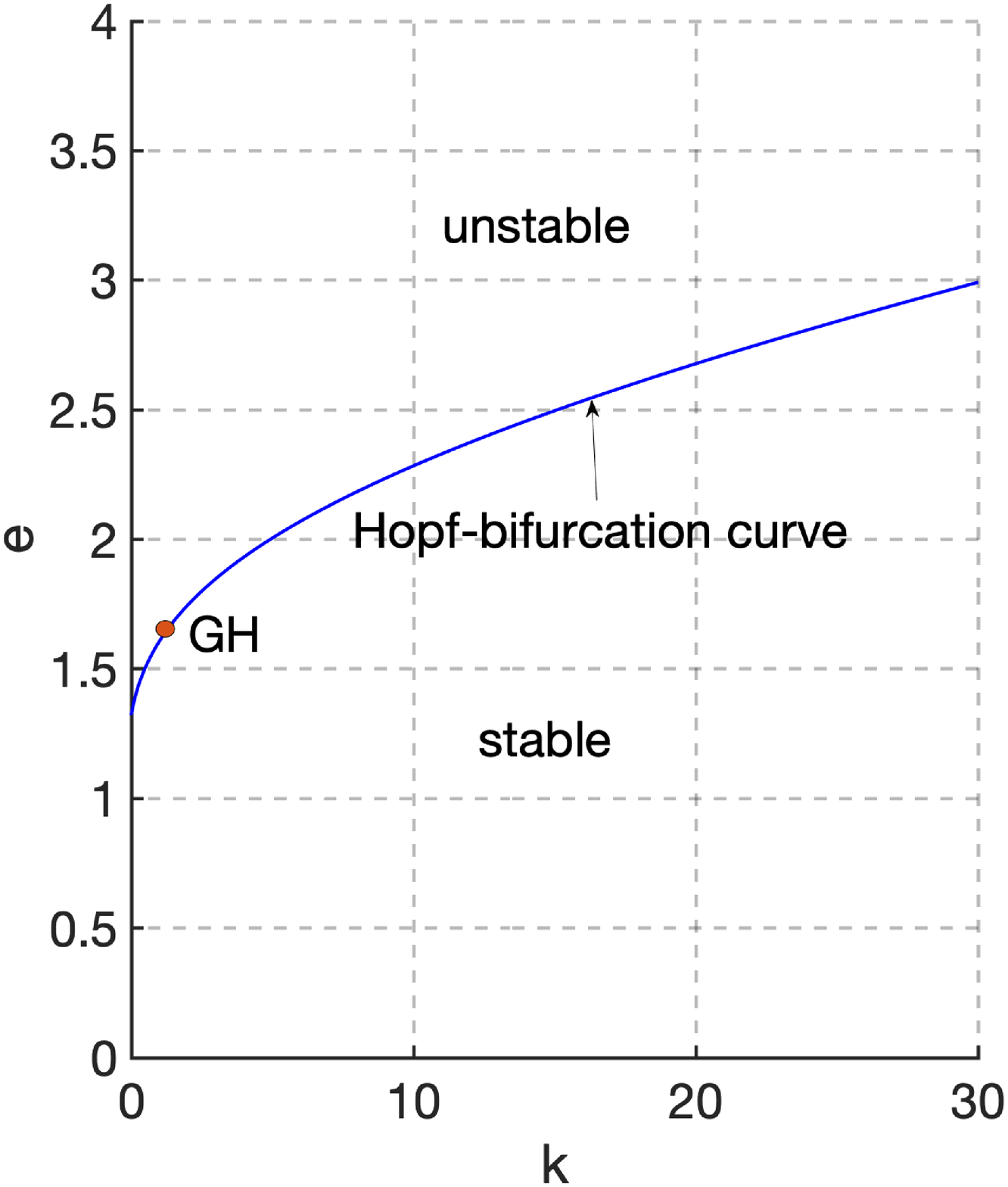}}
\subfigure[]{    
    \includegraphics[scale=.170]{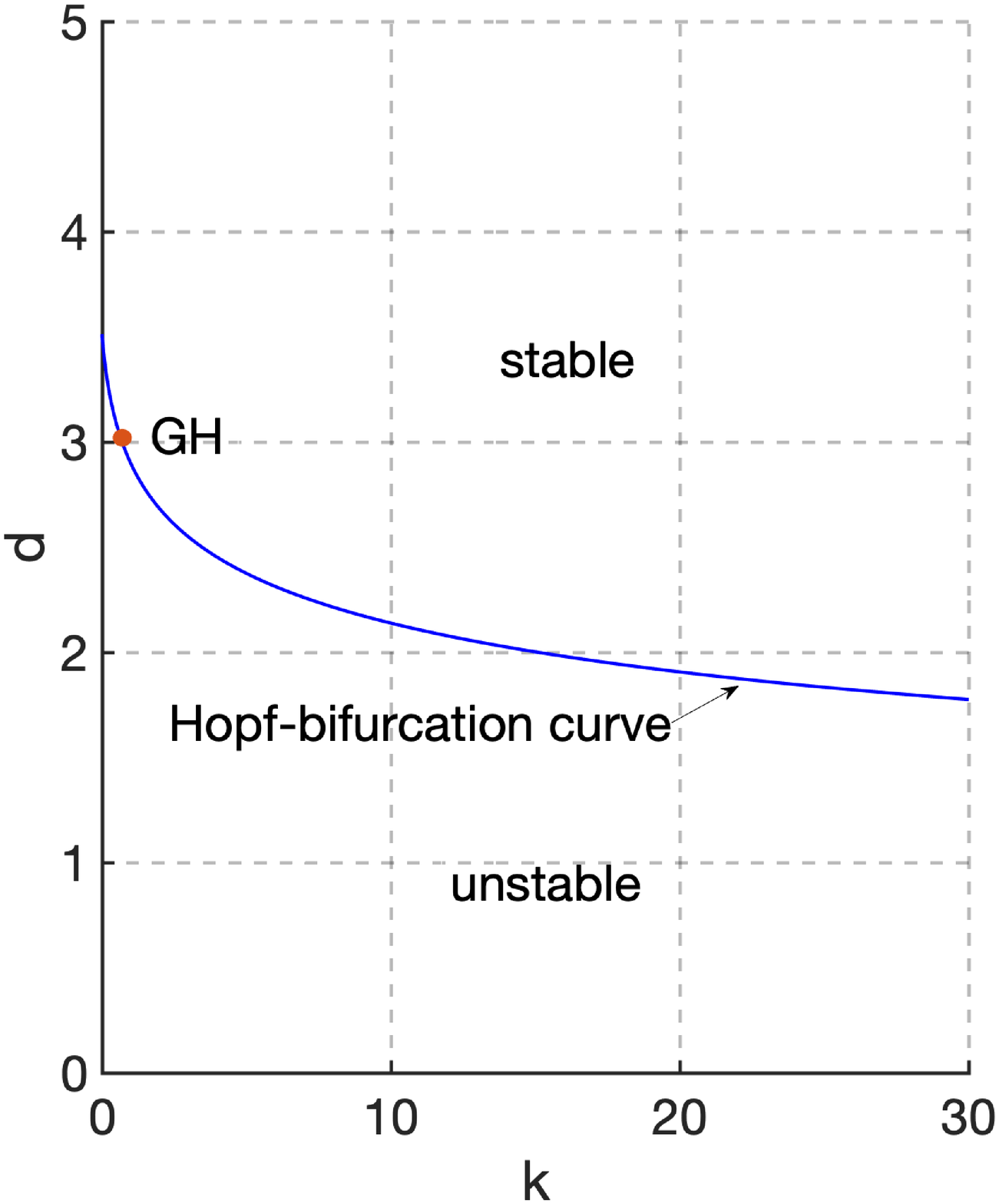}}    
\subfigure[]{    
    \includegraphics[scale=.170]{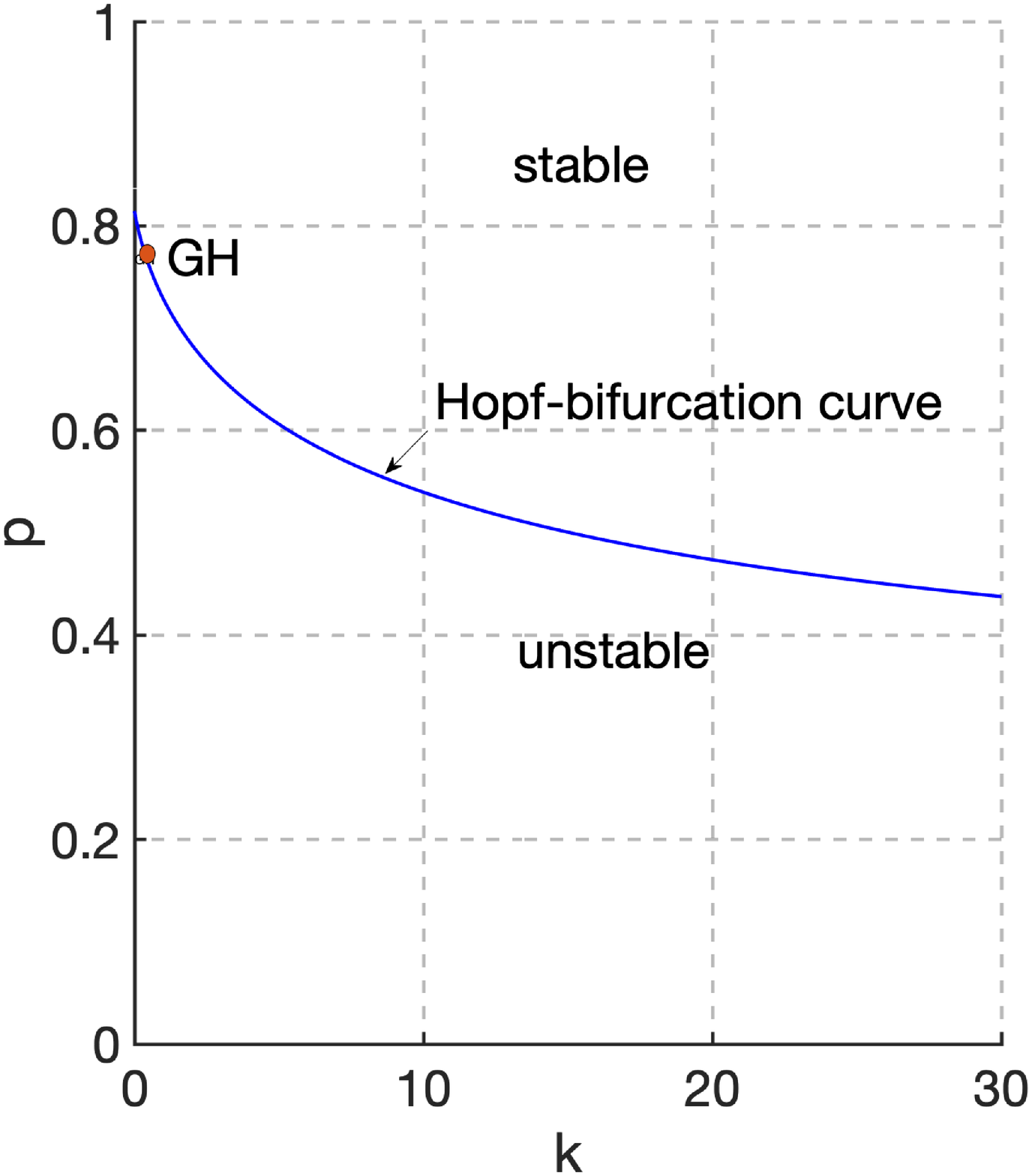}}   
\subfigure[]{    
    \includegraphics[scale=.170]{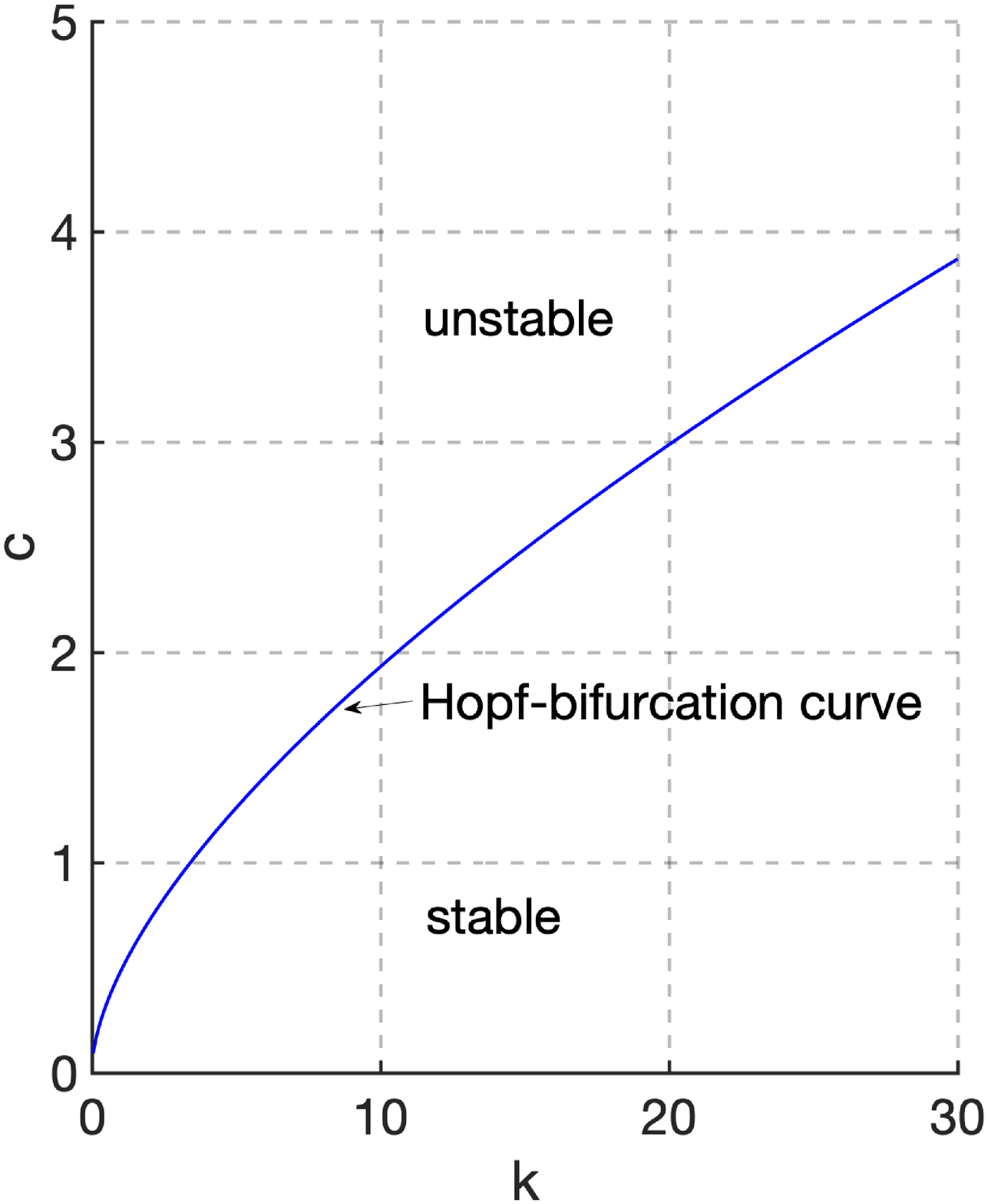}}
\subfigure[]{    
    \includegraphics[scale=.170]{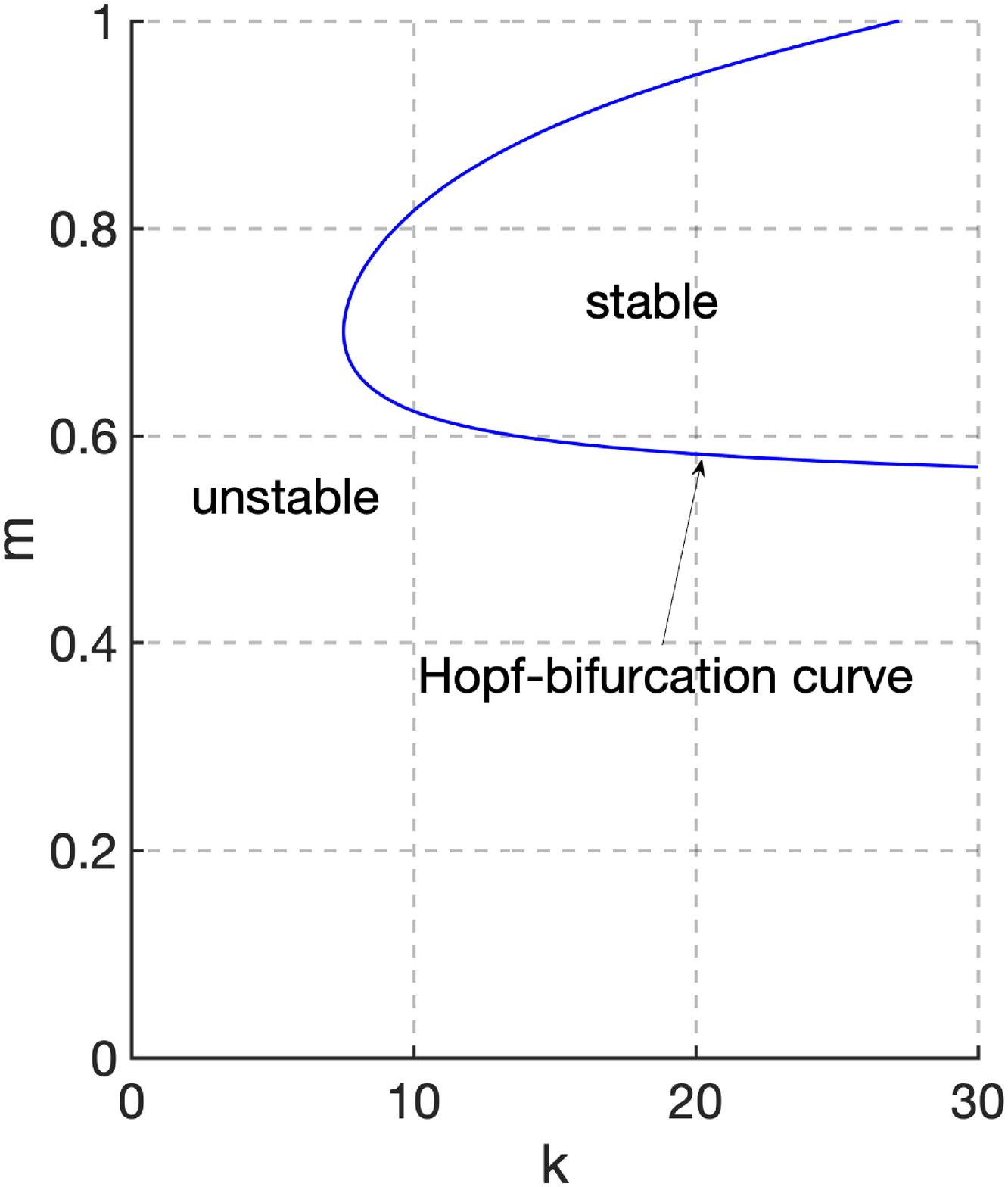}}     
\end{center}
\caption{Two-parameter bifurcation diagrams of the model \eqref{EquationMain}. Here, we use parameters from Fig. \ref{fig:Hopf_Main}(a). (GH: Generalized Hopf point)}
\label{fig:2param_bifurcation}
\end{figure}
\subsubsection{Generalized Hopf-bifurcation}
We note here that from the  two-dimensional projection of the Hopf-bifurcation diagrams of the model \eqref{EquationMain} in Fig \ref{fig:2param_bifurcation},  a generalized Hopf-bifurcation or Bautin bifurcation is observed. The generalized Hopf-bifurcation is a local bifurcation of co-dimension 2 and this happens when the first Lyapunov coefficient is zero, and the coexistence equilibrium point has a pair of purely imaginary eigenvalues. The generalized Hopf-bifurcation point separates branches of subcritical and supercritical Hopf-bifurcation in the parameter plane. 

Now, we present a conjecture that pertains to generalized Hopf-bifurcation.
\begin{conjecture}[Existence of Generalized Hopf-bifurcation]\label{conjecture:Bautin}
Assume that the model \eqref{EquationMain}  admits a Hopf-bifurcation as in Theorem \ref{bifurcationMain} for a given set of parameters. If the first Lyapunov coefficient becomes zero and the coexistence equilibrium point has a pair of purely imaginary eigenvalues, then the model \eqref{EquationMain} undergoes a Bautin or generalized Hopf-bifurcation.
\end{conjecture}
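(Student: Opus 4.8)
The plan is to reduce the statement to a verification of the non-degeneracy hypotheses in the Bautin (generalized Hopf) bifurcation theorem, following the normal-form treatment in Kuznetsov's \emph{Elements of Applied Bifurcation Theory}. By assumption the model sits at a parameter value where $E_2(u^*,v^*)$ has eigenvalues $\pm i\omega$ with $\omega=\sqrt{M}>0$, and the first Lyapunov coefficient $L$ of the third-order expansion \eqref{Lyapunov_eqn} vanishes; these are precisely the two defining conditions of a codimension-two Bautin point. Since $u^*,v^*>0$, the vector field $(F,G)$ is real-analytic in a neighborhood of $E_2$ (the non-smoothness at the origin is irrelevant here), so the classical normal-form machinery applies verbatim.

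First I would set up a genuine two-parameter unfolding. Taking $k$ together with one structural parameter $\beta\in\{b,e,c,d,p,m\}$ — the diagrams in Fig. \ref{fig:2param_bifurcation} indicate $\beta=b$ or $\beta=e$ are convenient — the Hopf locus $S(k,\beta)=0$, $M(k,\beta)>0$ is, by the implicit function theorem, a smooth curve $k=k_h(\beta)$; the transversality estimate $\partial_k S|_{k_h}=-\tfrac{1}{2}\,a v^*/(1+k_h v^*)^2\neq 0$ already obtained in the proof of Theorem \ref{bifurcationMain} guarantees this, once one also differentiates the nullcline identities \eqref{int1} and \eqref{int3} to account for the dependence of $u^*,v^*$ on the parameters. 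Restricting $L$ to this curve defines $\ell(\beta)\coloneqq L(k_h(\beta),\beta)$; the Bautin point corresponds to $\ell(\beta^*)=0$, and the numerics of Remark \ref{remark:stability} (one supercritical and one subcritical branch) give, via the intermediate value theorem, the existence of such a $\beta^*$ with moreover a strict sign change $\ell'(\beta^*)\neq 0$.

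Next comes the substantive part: verifying the two remaining hypotheses. (i) The \emph{second} Lyapunov coefficient $L_2$ must be nonzero at the candidate point. This requires expanding $(F,G)$ about $E_2$ to fifth order — extending the list of coefficients $a_{ij},b_{ij}$ — and pushing the Poincar\'e normal form to order five; the resulting $L_2$ is an explicit but unwieldy expression in $u^*,v^*$ and the parameters. (ii) The regularity (transversality in parameter space) condition: the map $(k,\beta)\mapsto\big(\xi(k,\beta),\ell\big)$, real part of the eigenvalue pair against first Lyapunov coefficient, must be a local diffeomorphism at $(k_h(\beta^*),\beta^*)$; given $\partial_k S\neq 0$ this reduces to the strict crossing $\ell'(\beta^*)\neq 0$ already noted. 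With (i) and (ii) in hand, the Bautin theorem yields the curve of saddle-node (fold) bifurcations of limit cycles emanating from $(k_h(\beta^*),\beta^*)$ along which the Hopf bifurcation changes criticality, which is the assertion.

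The main obstacle is hypothesis (i). Because the coexistence equilibrium is not analytically accessible (Remark \ref{remark: equilibrium points}), $L_2$ cannot be evaluated in closed form; one must argue \emph{generically}, e.g. by showing that the system $\{S=0,\ L=0,\ L_2=0\}$ together with the nullcline equations is overdetermined on the admissible parameter region, or by a resultant/Gr\"obner-basis elimination of $u^*,v^*$ followed by a dimension count. Controlling that expression — in particular ruling out a whole branch on which $L$ and $L_2$ vanish simultaneously — is exactly what keeps the statement at the level of a conjecture rather than a theorem; a rigorous proof would stand or fall on this step.
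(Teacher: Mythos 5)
The statement you are addressing is left as a conjecture in the paper: the authors offer no proof, only the definition of a Bautin point and the numerical evidence of the GH points detected by MATCONT in the two-parameter diagrams of Fig. \ref{fig:2param_bifurcation}. Your proposal is the natural analytic route (Kuznetsov-style normal form, codimension-two unfolding in $(k,\beta)$), and you are candid that it is not a proof: the two non-degeneracy hypotheses of the Bautin theorem --- the second Lyapunov coefficient $L_2\neq 0$ at the candidate point, and the regularity of the map $(k,\beta)\mapsto(\xi,L)$ --- are exactly what you cannot verify, because the coexistence equilibrium is only implicitly defined. That is the genuine gap, and it is the same obstruction that keeps the statement at conjecture level in the paper; your suggestion of an elimination/resultant or genericity argument is a plausible direction but is not carried out, so the proposal should be read as a proof strategy rather than a proof.

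One concrete factual slip to fix: you invoke Remark \ref{remark:stability} as evidence of ``one supercritical and one subcritical branch'' to get a sign change of $L$ along the Hopf locus by the intermediate value theorem. But in Remark \ref{remark:stability} both computed Hopf points of Fig. \ref{fig:Hopf_Main} are subcritical ($L=1.83219e^{-01}>0$ and $L=1.76388e^{-02}>0$), so that remark gives no sign change at all. The separation into supercritical and subcritical branches is only asserted in the two-parameter continuation of Fig. \ref{fig:2param_bifurcation}, i.e. it is itself numerical MATCONT output, not an analytic fact you may feed into an IVT argument. So even the existence step of your plan (finding $\beta^*$ with $\ell(\beta^*)=0$ and $\ell'(\beta^*)\neq 0$) currently rests on numerics, and a rigorous version would have to establish the sign change of the first Lyapunov coefficient along the Hopf curve analytically --- on top of the $L_2\neq 0$ and transversality verifications you already flag as open.
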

\subsection{Global Bifurcation}
 Now, there exist a unique value of parameter $k$ for which   $W^s(E_0)$ and $W^u(E_0)$ coincide, and that implies existence of a homoclinic loop in model \eqref{EquationMain}. In particular, we state a conjecture concerning the effect of fear of predators on the global dynamics of model \eqref{EquationMain}.
\begin{conjecture}[Existence of Homoclinic Bifurcation]\label{conjecture:Homoclinic_k}
Consider the model \eqref{EquationMain} where all  parameters are fixed except $k\geq 0$. There exists $k^*>0$ such that a homoclinic loop occurs when $k=k^*$.
\end{conjecture}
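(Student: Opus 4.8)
The plan is to recast the statement as a one-parameter \emph{separatrix-splitting} problem at the singular equilibrium $E_0$, settle existence by an intermediate-value argument, and settle uniqueness by a monotonicity argument. The first task is to give rigorous meaning to $W^s(E_0)$ and $W^u(E_0)$ as interior separatrices, since the Jacobian at $E_0$ is undefined (Remark \ref{remark: on singularity of E0 and E1}); here the only non-smoothness is the corner $E_0$, the coordinate axes are invariant, and the field is real-analytic on $\mathbb{R}^2_{++}$. I would desingularize $E_0$ by an appropriate blow-up adapted to the term $u^pv^m$ (e.g.\ a polar blow-up combined with a McGehee-type time rescaling, accommodating the non-polynomial and possibly irrational exponents), so that $E_0$ is replaced by an invariant arc carrying finitely many (semi-)hyperbolic equilibria. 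Among these sit the endpoints of an unstable separatrix $\Gamma^u_k$ that leaves $\mathbb{R}^2_{++}$ at $E_0$ and a stable separatrix $\Gamma^s_k$ that enters it, the latter being exactly the dividing curve of Remark \ref{remark: on singularity of E0 and E1}. Invariant-manifold theory for the blown-up field then yields existence, uniqueness, and --- the delicate point --- continuous (indeed $C^1$) dependence of $\Gamma^u_k$ and $\Gamma^s_k$ on $k$, uniformly on compact $k$-intervals. This desingularization, with the uniform-in-$k$ control of the separatrices, is the step I expect to be the main obstacle.

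Granting that, fix a cross-section $\Sigma$ transverse to the flow (for instance a short segment of $\{v=v^*\}$ meeting both nullclines) and define the signed separation $D(k)$ as the difference between the first forward crossing of $\Gamma^u_k$ with $\Sigma$ and the first backward crossing of $\Gamma^s_k$ with $\Sigma$, normalized so that $D(k)=0$ iff $\Gamma^u_k=\Gamma^s_k$, i.e.\ iff a homoclinic loop through $E_0$ exists. Now run the intermediate-value argument. At $k=0$ the system has an interior attractor, and by Lemma \ref{lemma:Boundedness_Dissipative} together with Poincar\'e--Bendixson, $\Gamma^u_0$ is trapped in the basin bounded by $\Gamma^s_0$, so $D(0)$ has a definite sign, say $D(0)>0$. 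For $k$ large, Theorem \ref{thm:FDFTE} shows that initial data which for small $k$ lay in the interior basin instead reach the predator axis in finite time; this forces $\Gamma^u_k$ onto the opposite side of $\Gamma^s_k$, so $D(k)<0$. Continuity of $D$ in $k$ (from the blow-up step) then produces $k^*\in(0,\infty)$ with $D(k^*)=0$, which is the asserted homoclinic value.

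For uniqueness I would use a rotated-vector-field argument in the spirit of Duff--Perko \cite{Perko13}. One computes
\begin{align*}
\frac{\partial F}{\partial k}\,G-\frac{\partial G}{\partial k}\,F=-\frac{a u v}{(1+kv)^2}\,G(u,v),
\end{align*}
which vanishes on the predator nullcline $\{G=0\}$ and elsewhere in $\mathbb{R}^2_{++}$ has the sign of $-G$; hence $\{X_k\}$ is a family of vector fields that rotate monotonically in $k$ on each side of the predator nullcline, with opposite senses on the two sides. Since any homoclinic loop through $E_0$ must cross that nullcline, plain monotonicity of $D$ is not automatic; one repairs this by following the loop along its two arcs separately and checking that, once $\Gamma^u_k$ and $\Gamma^s_k$ have crossed, the combined rotation still drives them monotonically apart, so $D$ changes sign exactly once and $k^*$ is unique. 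As an independent consistency check --- and an alternative route to existence --- the value $k^*$ must be where the branch of periodic orbits born at the Hopf bifurcation of Theorem \ref{bifurcationMain} terminates: that cycle cannot collapse back onto $E_2$ (no second Hopf point on the relevant interval) and, being bounded by Lemma \ref{lemma:Boundedness_Dissipative}, cannot escape to infinity, so it must be absorbed into a loop on $\partial\mathbb{R}^2_{++}$, namely a homoclinic connection at $E_0$.
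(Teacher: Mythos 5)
First, a point of comparison: the paper does not prove this statement at all --- it is posed as Conjecture \ref{conjecture:Homoclinic_k} and supported only by the numerical phase portraits of Fig.\ \ref{fig:HomoclinicMain_Curve}. So your proposal is an attempt at something the authors leave open, and it must stand on its own; as written it is a reasonable programme (separatrix splitting, intermediate value theorem, rotated vector fields) but not a proof, and the gaps are concrete. The whole intermediate-value argument rests on $W^s(E_0)$ and $W^u(E_0)$ being well-defined interior separatrices that depend continuously on $k$, which you delegate to a blow-up of $E_0$. That step is not routine here: the right-hand side of \eqref{EquationMain} is non-Lipschitz on the axes and solutions there are non-unique (this is precisely what makes finite-time extinction possible; cf.\ Lemma \ref{lemma:Nonnegativity} and Remark \ref{remark: on singularity of E0 and E1}), the exponents $p,m$ may be irrational so quasi-homogeneous/polynomial blow-up techniques do not apply verbatim, and the ``stable manifold'' of $E_0$ is really the boundary of a two-dimensional set of orbits hitting $u=0$ in finite time, so standard invariant-manifold theory gives neither existence nor uniqueness nor $C^1$ dependence on $k$. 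You flag this as the main obstacle, but without it there is no continuous splitting function $D(k)$ and hence no intermediate-value argument.

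Second, the endpoint signs are unjustified and, as stated, conflict with the paper's own example. The conjecture fixes an arbitrary parameter set; nothing guarantees an interior attractor at $k=0$, and in Fig.\ \ref{fig:HomoclinicMain_Curve}(a) (with $k=0$) the coexistence equilibrium is unstable and $W^s(E_0)$ already lies \emph{below} $W^u(E_0)$ --- the opposite of your $D(0)>0$ configuration --- so at best your argument proves a conditional version of the conjecture under added hypotheses. Similarly, Theorem \ref{thm:FDFTE} concerns a \emph{fixed} initial datum that converges to a stable coexistence state when $k=0$; it does not control the unstable separatrix $\Gamma^u_k$, which moves with $k$ and emanates from the degenerate corner where the estimate $v(t)\geq v(0)e^{-dt}$ loses force, so ``$D(k)<0$ for large $k$'' needs its own uniform argument. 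The uniqueness step also fails as written: since $\partial_k F\,G-\partial_k G\,F=-\frac{auv}{(1+kv)^2}\,G$ changes sign across the predator nullcline, the family is not a (semi-)rotated family on the region swept by the loop, so the Duff--Perko monotonicity of the separatrix splitting \cite{Perko13} is unavailable and your proposed ``repair'' is an assertion, not an argument (though note the conjecture only claims existence). Finally, the Hopf-branch ``consistency check'' is inconclusive: the cycle branch born at Theorem \ref{bifurcationMain} could terminate at a fold of limit cycles (the numerics even detect Bautin points) or in a boundary connection not involving $E_0$. In short: the blow-up/continuity lemma, both endpoint-sign claims, and the monotonicity claim all remain to be proved, and the first of these is the real mathematical content of the conjecture.
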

Next, we explain Conjecture \ref{conjecture:Homoclinic_k} via numerical simulations in Fig. \ref{fig:HomoclinicMain_Curve}.
\begin{figure}[!htb]
\begin{center}
\subfigure[]{
    \includegraphics[scale=.69]{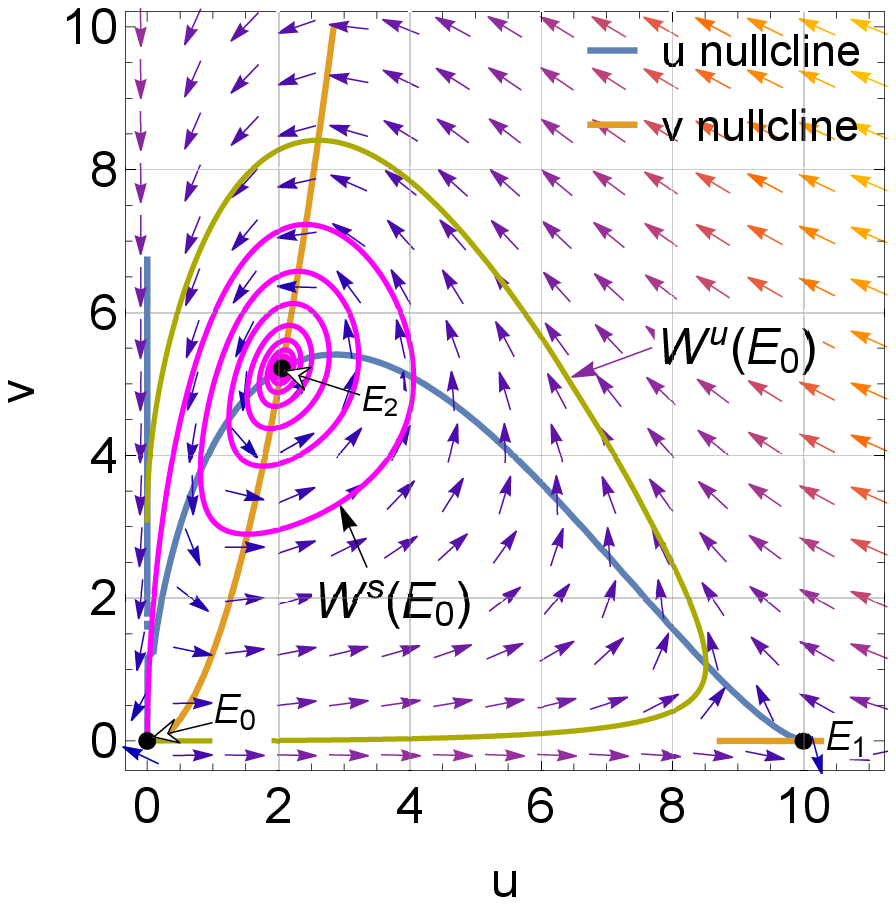}}
\subfigure[]{    
    \includegraphics[scale=.69]{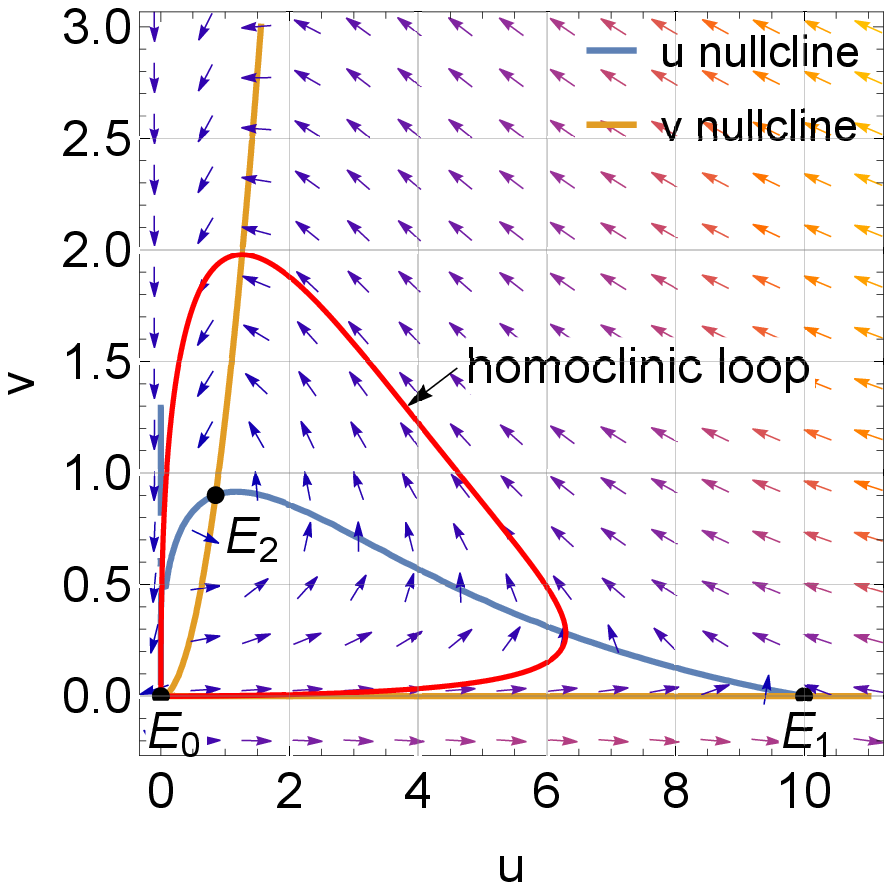}}
\end{center}
\caption{Phase plane portraits of the model \eqref{EquationMain}  (a) Stable ($W^s(E_0)$) and unstable ($W^u(E_0)$) manifold with $k=0$ where $W^s(E_0)$ is below $W^u(E_0)$, here $E_2$ is unstable   (b) $E_2$ is stable and a homoclinic loop is formed when $W^s(E_0)=W^u(E_0)$ for $1.548<k<1.563$. Here the parameters used are $m=0.7,~a=3,~b=0.3,~p=0.6,~d=0.75,~c=1$ and $e=0.8.$ }
\label{fig:HomoclinicMain_Curve}
\end{figure}

\section{Fear-Driven Finite Time Extinction (FDFTE)}\label{Sec:FDFTE}
We seek to investigate the effect of introducing fear of predator in the predator-prey model -- that is, is it possible for the fear effect to drive a stable coexistence equilibrium point to extinction in finite time?

Thus, we state our result concerning finite time extinction driven by fear of predators.

\begin{theorem}[FDFTE] \label{thm:FDFTE}
Consider the predator-prey model given by \eqref{EquationMain}, and a certain parameter set, and certain initial data $(u(0),v(0))$ that converges uniformly to a stable coexistence equilibrium point $(u^*,v^*)$ for $k=0$. Then there exists $k>0$ such that all trajectories initiating from the same initial data $(u(0),v(0))$ will lead to finite time extinction of $u$, followed by $v$ going extinct asymptotically.
\end{theorem}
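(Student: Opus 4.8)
The plan is to combine a quantitative finite‑time extinction estimate for the prey equation -- which is available precisely because the functional response is non‑smooth at $u=0$ when $0<p<1$ -- with a monotonicity‑in‑$k$ argument that pushes the given initial data onto the ``extinction side'' of the separatrix described in Remark \ref{remark: on singularity of E0 and E1}. First I would pin down the $k=0$ picture: by Theorem \ref{localstability} (taking, say, parameters with $p+m\ge 1$, so that $E_2$ is LAS) the basin of attraction of the coexistence equilibrium is a nonempty open set, and I would pick $(u(0),v(0))$ inside it, together with the mild extra requirement that $u(0)$ be small relative to $v(0)$ (made precise below and arranged by the choice of the remaining parameters). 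This is exactly the hypothesis that the trajectory converges to $(u^*,v^*)$ when $k=0$.

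Next, the extinction mechanism. For every $k\ge 0$ the predator equation gives $\dot v=-dv+eu^pv^m\ge -dv$, hence $v(t)\ge v(0)e^{-dt}$ while the solution exists, and by Lemma \ref{lemma:Boundedness_Dissipative} there is a uniform bound $u(t)\le U$. Fix a horizon $T>0$ and set $v_T:=v(0)e^{-dT}>0$. On $[0,T]$,
\[
\dot u=\frac{au}{1+kv}-bu^2-cu^pv^m\le u^p\left(\frac{aU^{1-p}}{1+kv_T}-cv_T^{\,m}\right).
\]
Choosing $k$ large enough that $\dfrac{aU^{1-p}}{1+kv_T}\le \tfrac12\,cv_T^{\,m}$ yields $\dot u\le-\delta u^p$ with $\delta:=\tfrac12\,cv_T^{\,m}>0$; since $p<1$, comparison with $\dot w=-\delta w^p$, $w(0)=u(0)$, forces $u$ to reach $0$ by the time $t^\ast:=\dfrac{u(0)^{1-p}}{\delta(1-p)}$, provided $t^\ast\le T$. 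As $t^\ast$ is proportional to $u(0)^{1-p}v(0)^{-m}e^{mdT}$, the constraint $t^\ast\le T$ is satisfiable for a suitable $T$ exactly when $u(0)^{1-p}/v(0)^m$ is small, which is the smallness condition imposed above. Once $u\equiv 0$ on $[t^\ast,\infty)$, the predator equation reduces to $\dot v=-dv$, so $v(t)=v(t^\ast)e^{-d(t-t^\ast)}\to 0$, giving the asymptotic extinction of $v$.

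It remains to connect the two: for $k$ in the above range the trajectory from $(u(0),v(0))$ must actually lie in the regime where the inequality for $\dot u$ was exploited, i.e. on the extinction side of the separatrix. I would argue by monotonicity in $k$: the $u$‑component of the vector field, $au/(1+kv)-bu^2-cu^pv^m$, is strictly decreasing in $k$ at every point of $\mathbb{R}^2_{++}$, so by a standard comparison argument the prey component of the solution is nonincreasing in $k$, the region $\{\dot u<0\}$ enlarges, the prey nullcline $g_1(u,v;k)=0$ retreats toward the axes, and the separatrix $W^s(E_0)$ of Remark \ref{remark: on singularity of E0 and E1} moves toward the $u$‑axis; hence for $k$ large $(u(0),v(0))$ lies strictly above it and is attracted to the predator axis, where the estimate of the previous paragraph applies. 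An alternative route that avoids tracking $W^s(E_0)$ directly is to increase $k$ past the saddle‑node value $k_s$ of Theorem \ref{thm:SN-k}, so that no interior equilibrium survives; then dissipativity (Lemma \ref{lemma:Boundedness_Dissipative}) together with Poincaré--Bendixson forces every $\omega$‑limit set onto the boundary of $\mathbb{R}^2_{++}$ (the predator axis) once periodic orbits have been excluded, and again the finite‑time estimate upgrades this to extinction of $u$.

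The main obstacle is exactly this linking step. Because the linearization at $E_0$ is singular (Remark \ref{remark: on singularity of E0 and E1}) and solutions through the axes are non‑unique (Lemma \ref{lemma:Nonnegativity}), one has to argue with care either that $W^s(E_0)$ is a well‑defined curve depending continuously and monotonically on $k$, or, in the saddle‑node route, that no periodic orbit or polycycle obstructs convergence to the boundary (for instance via a Dulac function or the sign structure of $\mathbf{J}$). A secondary technical point is to verify that initial data can be chosen simultaneously in the $k=0$ basin of $E_2$ and satisfying the required smallness of $u(0)^{1-p}/v(0)^m$; this is handled by an appropriate choice of the remaining parameters (e.g. a sufficiently large predation rate $c$), and is the kind of explicit check the accompanying numerical example in Fig.~\ref{fig:FDFTE} is meant to confirm.
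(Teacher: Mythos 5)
Your second paragraph is, in essence, the paper's own argument: bound $v(t)\ge v(0)e^{-dt}$, use boundedness of $u$, take $k$ large so that the fear term suppresses the growth term on a finite window, and invoke $p<1$ to force $u$ to zero in finite time by comparison, after which $\dot v=-dv$ gives asymptotic extinction of $v$. The gap is in how you close the quantitative step. Because you absorb the growth term into the predation term, your decay rate is $\delta=\tfrac12 c v_T^m=\tfrac12 c\,v(0)^m e^{-mdT}$, and the self-consistency requirement $t^*\le T$ reads $Te^{-mdT}\ge \frac{2u(0)^{1-p}}{c(1-p)v(0)^m}$; since $Te^{-mdT}\le \frac{1}{mde}$ for all $T>0$, no choice of $T$ (and enlarging $k$ does not help once the threshold is met) works unless $u(0)^{1-p}/v(0)^m$ is small. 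That smallness is not a hypothesis of Theorem \ref{thm:FDFTE}: the parameter set and the initial data are given (any data converging to the stable coexistence state at $k=0$), and only $k$ is at your disposal, so you may not "arrange" the condition by choosing $c$ or the initial point. Thus, as written, you prove a strictly weaker statement than the one claimed. (The paper's proof chooses $k\gg1$ for the given data and horizon and imposes no such condition; its own extinction criterion, based on the quasi-equilibrium of the comparison ODE with the source term retained, is admittedly loose on exactly this point, but it does not introduce an extra hypothesis on the data.)

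The third paragraph is a misdiagnosis: there is nothing left to "link". Your inequality $\dot u\le u^p\bigl(\frac{aU^{1-p}}{1+kv_T}-cv_T^m\bigr)$ was derived along the actual trajectory emanating from $(u(0),v(0))$, using only $v(t)\ge v(0)e^{-dt}$ (true for every $k$) and the uniform bound on $u$; once the time estimate closes, finite-time extinction follows by comparison with no reference to the location of $W^s(E_0)$, to the saddle-node value $k_s$ of Theorem \ref{thm:SN-k}, or to a Poincar\'e--Bendixson argument. Conversely, the machinery you propose for that step is itself unsupported: monotone dependence of $W^s(E_0)$ on $k$ is delicate precisely because of the singular linearization at $E_0$ and non-uniqueness at the axes (as you note), and the "no interior equilibrium plus Poincar\'e--Bendixson" route would still require excluding periodic orbits and polycycles. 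Replacing the direct comparison estimate by that phase-plane argument would therefore weaken the proof rather than complete it; the paper's proof is the direct comparison (phrased as a contradiction) and uses no phase-plane input at all.
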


\begin{proof}
We argue by contradiction. We begin by assuming not. Thus for a certain parameter set, with $k=0$ and certain initial data 
$(u^{*}(0),v^{*}(0))$ that converges uniformly to a stable coexistence equilibrium point $(u^*,v^*)$, there exists a $k > 0$ s.t for trajectories emanating from the same initial data, and parameters, we will have
 \begin{equation}
 u \geq u^{*}(0)e^{-T^{*}} > 0,  
 \end{equation}
on $[0,T^{*}] ,\  \forall T^{*}$. Now,
$$\dfrac{dv}{dt}\geq -dv. $$
This implies,
\begin{align}
\label{eq:1v}
v(t)\geq v(0)e^{-dt}.
\end{align}
Note, via \eqref{eq:1v}, the upper  bound on $u$, and positivity of solutions, we have
\begin{eqnarray}
 \dfrac{du }{dt} &=& \dfrac{au}{1+kv} -  u^{2} - cu^{p}v^{m}, \nonumber \\
&\leq& \frac{a u}{kv}     - cu^{p}v^m  \nonumber \\
&\leq&  \frac{a^{2}}{kv}   - cu^{p}v^m  \nonumber \\
&\leq&   \frac{a^{2}e^{dt}}{kv(0)} - c (v(0))^m e^{-mdt} u^{p} \nonumber \\
\end{eqnarray}
 However, we see that the solution $u$ to,
 \begin{equation}
  \dfrac{du }{dt} = \frac{a^{2}e^{dt}}{kv(0)} - c (v(0))^m e^{-mdt} u^{p}
 \end{equation}
will go extinct in finite time if 
 \begin{equation}
\left(  \frac{a^{2}e^{(m+1)dt}}{k c (v(0))^{(m+1)}} \right)^{\frac{1}{p}}<  u_{0}.
 \end{equation}
Thus, given an initial data $(u^{*}(0),v^{*}(0))$, and $T^{*} > 0$ we can choose $k>>1$, s.t.
 \begin{equation}
\left(  \frac{a^{2}e^{(m+1)dt}}{k c (v^{*}(0))^{(m+1)}} \right)^{\frac{1}{p}}<  u^{*}_{0}, \ \ \forall t \in [0, T^{*}],
 \end{equation}
 so that we obtain,
  \begin{equation}
0 \leq  u \leq u^{*}(0)e^{-T^{*}}, 
 \end{equation}
on $[0,T^{*}]$, with $u$ being driven to extinction in finite time, from which the asymptotic extinction of $v$ follows.  Since  $T^{*}$ is arbitrary, we have derived a contradiction, and so the theorem is proved.
\end{proof}

\begin{remark}
We see from Theorem \ref{thm:FDFTE} that $k$ chosen sufficiently large will lead to prey extinction in finite time; however in practice, $k$ need not be large, see Fig. \ref{fig:FDFTE}. Thus, a necessary condition on the size of $k$ to cause finite time prey extinction remains unproven. 
\end{remark}
We elucidate FDFTE using the  numerical simulation in Fig. \ref{fig:FDFTE}
\begin{figure}[hbt!]
\begin{center}
    \includegraphics[scale=.6]{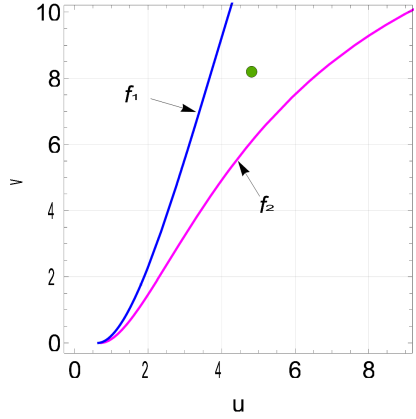}
\end{center}
\caption{Diagram demonstrating Theorem \ref{thm:FDFTE}. Herein, $f_1$ is the stable manifold of $E^1_2$ when $k=0$ and $f_2$ is the stable manifold of $E^1_2$  when $k=0.03$. The solid green circle represents an initial data at $(u(0),v(0))=(4.8,8.3)$.  Other parameter sets are given in the caption in Fig. \ref{fig:SN_Main}. }
\label{fig:FDFTE}
\end{figure}
%

\section{Impact of Effort Harvesting of Predators}\label{Sect:Harvesting}
\noindent
In this section, we consider the impact of an external effort dedicated to harvesting of predators in the model \eqref{EquationMain}. A natural question that arises is: how do the external effort of predator harvesting affect the FTE dynamic of the prey population species? Here, the harvesting is proportional to the density of harvested predator population species.  The corresponding differential equations can be represented as:
\begin{equation}\label{eq:HarvestingEqn}
\begin{cases}
\dfrac{du }{dt} &=\dfrac{a u}{1+k v}  - b u^2 -  c u^p v^m,   \qquad\; u(0)\geq 0,\\
\dfrac{dv }{dt} &=-vd-qv^r + e u^p v^m. \qquad \qquad v(0)\geq 0.
\end{cases}
\end{equation}
Let  the parameter $q>0$ represents the external effort dedicated to predator harvesting. Here $0<r\leq 1$.  We recover the model \eqref{EquationMain} when $q=0$. Nonnegativity of solutions of  model \eqref{eq:HarvestingEqn} follows from Lemma \ref{lemma:Nonnegativity}. The  model \eqref{eq:HarvestingEqn} contains a trivial equilibrium point $E_0(0,0)$, an axial equilibrium point, $E_1(a/b,0)$, and coexistence equilibrium point(s) $E_2$ (or $E^i_2$, for $i=1,2$).

\subsection{Dynamical Guidelines}
The linearized model \eqref{eq:HarvestingEqn} at any coexistence equilibrium point $(u^*,v^*)$ is given by the Jacobian Matrix $\bf{J^*}$
\begin{align}\label{Jacobian_Harvesting}
 \bf{J^*} &= 
     \begin{bmatrix}
     c_{11} &  c_{12}\\
      c_{21} &  c_{22}
     \end{bmatrix},
\end{align}
where\\
$c_{11}=J_{11}, c_{12}=J_{12},$ and $c_{21}=J_{21}$. Here $J_{11},J_{12}$ and $J_{21}$ are given by \eqref{Jacobian}. Now
\begin{align*}
c_{22}&= -d-qrv^{*r-1}+emu^{*p}v^{*m-1}\\
&=-d-qrv^{*r-1}+md+mqv^{*r-1}\\
&=-d(1-m)-q(r-m)v^{*r-1}. 
\end{align*}
%

\begin{theorem}\label{localstabilityHarvest}
Consider the model given by \eqref{eq:HarvestingEqn} and assume $r\geq m$.
\begin{itemize}
\item[(a)] There exists a unique  coexistence equilibrium point $E_2(u^*, v^*)$ which is LAS.
\item[(b)] There exist two coexistence equilibrium points such that $E^1_2(u_1^*, v_1^*)$ is a saddle and  $E^2_2(u_2^*, v_2^*)$ is LAS.
\end{itemize}
\end{theorem}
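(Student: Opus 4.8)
The plan is to exploit that the harvested system \eqref{eq:HarvestingEqn} differs from \eqref{EquationMain} only through the extra predator loss $-qv^{r}$, which leaves the prey equation (hence the prey nullcline $g_{1}=0$ of \eqref{int1}) untouched and changes only the $(2,2)$ entry of the linearization \eqref{Jacobian_Harvesting}, namely $c_{22}=-d(1-m)-q(r-m)v^{*r-1}$; the entries $c_{11}=J_{11}$, $c_{12}=J_{12}$, $c_{21}=J_{21}$ are unchanged as functions of $(u^{*},v^{*})$, with $c_{12}<0<c_{21}$ and $c_{11}$ of indefinite sign. The only place the hypothesis $r\ge m$ is used is the elementary observation that, since $0<m<1$ and $q,v^{*}>0$, we have $-d(1-m)<0$ and $q(r-m)v^{*r-1}\ge 0$, hence $c_{22}<0$, exactly the sign of $J_{22}$. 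Thus the sign pattern of \eqref{Jacobian_Harvesting} coincides with that of \eqref{Jacobian}, and the stability analysis \eqref{case1}--\eqref{case5} transfers once the equilibria are located.

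For existence I would rerun the nullcline argument of Section \ref{Sec:dynamical_analysis}. The prey nullcline is still the concave arc joining $u=0$ and $u=a/b$ on the prey axis. The predator nullcline becomes
\[
g_{2}(u,v)\coloneqq -d-qv^{r-1}+eu^{p}v^{m-1}=0,\qquad\text{equivalently}\qquad u=\left(\frac{dv^{1-m}+qv^{r-m}}{e}\right)^{1/p}.
\]
One has $\partial g_{2}/\partial u=epu^{p-1}v^{m-1}>0$ always, and, substituting $eu^{p}v^{m-1}=d+qv^{r-1}$ on the nullcline, $\partial g_{2}/\partial v=q(m-r)v^{r-2}-(1-m)d/v<0$ precisely because $r\ge m$; so the predator nullcline is strictly increasing in the $(u,v)$ plane, starting at $u=0$ (if $r>m$) or at $u=(q/e)^{1/p}>0$ (if $r=m$) when $v=0$, with $u\to\infty$ as $v\to\infty$. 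Since adding $-qv^{r}$ only deforms this curve, the qualitative picture of Fig. \ref{fig:1and2_intEquilibrium} persists, and by the intermediate value theorem the two nullclines intersect in $\mathbb{R}_{++}^{2}$ in either a unique point $E_{2}(u^{*},v^{*})$ or a pair $E^{1}_{2},E^{2}_{2}$ with $0<u_{1}^{*}<u_{2}^{*}<a/b$, the number of intersections being controlled, as in the $q=0$ case, by the relative curvatures of the two nullclines.

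For stability I would, following \cite{S21}, use the implicit function theorem to rewrite
\[
\det(\mathbf{J}^{*})=u^{*}v^{*}\,\frac{\partial g_{1}}{\partial v^{*}}\,\frac{\partial g_{2}}{\partial v^{*}}\left(\frac{dv^{*(g_{2})}}{du^{*}}-\frac{dv^{*(g_{1})}}{du^{*}}\right),
\]
where $dv^{*(g_{1})}/du^{*}$ and $dv^{*(g_{2})}/du^{*}$ are the slopes of the prey and predator nullclines at the equilibrium. Since $\partial g_{1}/\partial v^{*}<0$ and $\partial g_{2}/\partial v^{*}<0$ (equivalently $c_{22}<0$), the sign of $\det(\mathbf{J}^{*})$ is that of $dv^{*(g_{2})}/du^{*}-dv^{*(g_{1})}/du^{*}$. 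In case (a) the unique equilibrium is a transversal crossing at which the increasing predator nullcline is steeper than the prey nullcline, so $dv^{*(g_{2})}/du^{*}>dv^{*(g_{1})}/du^{*}$ (cf. Fig. \ref{fig:1and2_intEquilibrium}(a),(b)) and $\det(\mathbf{J}^{*})>0$; combined with $c_{22}<0$ this forces $\operatorname{tr}(\mathbf{J}^{*})<0$ when $b>c(1-p)u^{*p-2}v^{*m}$ (and it is imposed otherwise, as in \eqref{case2}), so $E_{2}$ is LAS by the Routh--Hurwitz criterion. In case (b), at $E^{1}_{2}$ the nullclines cross with $dv^{*(g_{2})}/du^{*}<dv^{*(g_{1})}/du^{*}$, whence $\det(\mathbf{J}^{*})<0$ and $E^{1}_{2}$ is a saddle, while at $E^{2}_{2}$ one has $dv^{*(g_{2})}/du^{*}>dv^{*(g_{1})}/du^{*}$, so $\det(\mathbf{J}^{*})>0$, and with $\operatorname{tr}(\mathbf{J}^{*})<0$ the point $E^{2}_{2}$ is LAS --- exactly mirroring \eqref{case3}--\eqref{case5}.

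The main obstacle I anticipate is not the stability computation, which is essentially a transcription of the proof of Theorem \ref{localstability} once $c_{22}<0$ is secured, but the verification that the harvesting-modified predator nullcline $u=((dv^{1-m}+qv^{r-m})/e)^{1/p}$ genuinely reproduces the geometry of the $q=0$ case --- that it remains globally monotone and that its intersection count with the concave prey nullcline is still exactly one or two --- since the extra summand $qv^{r-m}$ perturbs the curvature computation \eqref{concavity}. Within that, the one genuinely new calculation is $\partial g_{2}/\partial v<0$ under $r\ge m$, via the on-nullcline substitution $eu^{p}v^{m-1}=d+qv^{r-1}$, and that is the step I would be most careful to get right; it is precisely why the hypothesis $r\ge m$ is imposed.
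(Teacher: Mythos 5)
Your proposal is correct and follows essentially the same route as the paper, which omits the proof entirely by declaring it ``similar to Theorem \ref{localstability}'' after computing $c_{22}=-d(1-m)-q(r-m)v^{*r-1}$ in the Dynamical Guidelines subsection; your observation that $r\ge m$ forces $c_{22}<0$ (and, on the nullcline, $\partial g_{2}/\partial v<0$) so that the sign pattern and the slope-comparison argument of cases \eqref{case1}--\eqref{case5} transfer verbatim is exactly the intended argument, just written out in full. The nullcline-geometry caveat you raise is no worse than in the unharvested case, where the paper itself (Remark \ref{remark: equilibrium points}) resorts to numerics for the intersection count.
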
 

\begin{proof}
The proof of Theorem \ref{localstabilityHarvest} is similar to proof in Theorem \ref{localstability} and therefore omitted.
\end{proof}

\begin{theorem}\label{thm:H-bifurcation_Harvest}
Model \eqref{eq:HarvestingEqn} experiences Hopf-bifurcation around the coexistence equilibrium point $E_2$ at $q=q_h$, where
\begin{align*}
q_h=\frac{1}{r}v^{*1-r}\left[emu^{*p}v^{*m-1}-bu+c(1-p)u^{*p-1}v^{*m}-d\right],
\end{align*}
 provided
 $S(q)=0 $, $M(q)>0$, and
 $\dfrac{d}{dq}\left. \text{Re} [\lambda (q) \right]|_{q=q_h}\neq 0$ for $ i=1,2$.
\end{theorem}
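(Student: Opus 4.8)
The plan is to follow the template of Theorem~\ref{bifurcationMain}, now with the harvesting effort $q$ playing the role of the bifurcation parameter. Starting from the Jacobian $\mathbf{J^*}$ in \eqref{Jacobian_Harvesting}, I note that its only entry depending explicitly on $q$ is $c_{22}=-d-qrv^{*r-1}+emu^{*p}v^{*m-1}$, while $c_{11}=J_{11}=-bu^*+c(1-p)u^{*p-1}v^{*m}$, $c_{12}=J_{12}$ and $c_{21}=J_{21}$ are as in \eqref{Jacobian}. The characteristic polynomial at $E_2$ is $\lambda^2-S(q)\lambda+M(q)=0$ with $S(q)=c_{11}+c_{22}$ and $M(q)=c_{11}c_{22}-c_{12}c_{21}$. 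Imposing $S(q)=0$ gives
\[
-bu^*+c(1-p)u^{*p-1}v^{*m}-d-qrv^{*r-1}+emu^{*p}v^{*m-1}=0,
\]
and solving for $q$ produces exactly the stated expression for $q_h$.

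Next I would observe that at $q=q_h$ the characteristic equation reduces to $\lambda^2+M(q_h)=0$, so under the hypothesis $M(q_h)>0$ the eigenvalues form the purely imaginary pair $\lambda_{1,2}=\pm i\sqrt{M(q_h)}$. It remains to verify transversality: writing $\lambda_{1,2}(q)=\xi(q)\pm i\mu(q)$ in a neighbourhood of $q_h$ with $\xi(q)=\tfrac12 S(q)$ and $\mu(q)=\sqrt{M(q)-\tfrac14 S(q)^2}$, differentiation yields
\[
\left.\frac{d}{dq}\,\mathrm{Re}[\lambda_i(q)]\right|_{q=q_h}=\tfrac12\left.\frac{d}{dq}S(q)\right|_{q=q_h}=-\tfrac{r}{2}\,v^{*r-1}\neq 0
\]
since $r>0$ and $v^*>0$. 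Thus a simple conjugate pair of eigenvalues crosses the imaginary axis with nonzero speed, and the Hopf-bifurcation theorem \cite{Murray93,Perko13} gives the conclusion. One should also record that $q_h>0$ — needed for the bifurcation to be biologically relevant — precisely when $emu^{*p}v^{*m-1}-bu^*+c(1-p)u^{*p-1}v^{*m}-d>0$ at the equilibrium.

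The main obstacle is the same as in the unharvested case: the coexistence equilibrium $(u^*,v^*)$ is not available in closed form (Remark~\ref{remark: equilibrium points}), and both coordinates themselves vary with $q$ through the nullcline relations, so $S(q)$ in fact carries an implicit dependence on $q$ via $u^*(q),v^*(q)$ beyond the explicit $-qrv^{*r-1}$ term. The clean formula for $q_h$ and the transversality computation above are obtained by freezing $(u^*,v^*)$ and differentiating only the explicit occurrence of $q$, which is the convention already adopted in the proof of Theorem~\ref{bifurcationMain}. A fully rigorous argument would invoke the implicit function theorem to control $du^*/dq$ and $dv^*/dq$ and to check that these contributions cannot cancel the explicit $-rv^{*r-1}$ term; I expect this to be the delicate point, and it is presumably why $M(q_h)>0$ and the transversality inequality are imposed as standing hypotheses rather than derived from the model parameters.
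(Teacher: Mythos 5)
Your proposal is correct and follows essentially the same route the paper intends: the paper omits an explicit proof of Theorem \ref{thm:H-bifurcation_Harvest} (offering only the numerical validation in Example \ref{example:Hopf_q}), and your argument is precisely the analogue of the proof of Theorem \ref{bifurcationMain} with $q$ in place of $k$ — solving $S(q)=0$ using $c_{22}=-d-qrv^{*r-1}+emu^{*p}v^{*m-1}$ to get $q_h$, obtaining the purely imaginary pair from $M(q_h)>0$, and checking transversality via $\tfrac12 S'(q_h)=-\tfrac{r}{2}v^{*r-1}\neq 0$ with $(u^*,v^*)$ frozen, exactly the convention used there. Your closing remark on the suppressed implicit dependence of $(u^*,v^*)$ on $q$ is a fair caveat that applies equally to the paper's own treatment.
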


\begin{example}\label{example:Hopf_q}
To validate Theorem \ref{thm:H-bifurcation_Harvest}, we consider the following parameter values $m=0.6,~a=3,~k=0.08,~b=0.2,~p=0.5,~d=1,~c=2,~e=1.1,~q=1,~r=1$ (see Fig. \ref{fig:Hopf_HarvestParam_q}(a)). Hopf-bifurcation is obtained at $q=q_h=0.27068$ around the coexistence equilibrium point $E_2(2.54542,2.24175)$. Furthermore, at $q=q_h$, $S(q)=\operatorname{tr} \,({\bf{J^*}})=0$, $M(q)=\det \,({\bf{J^*}})=0.29264>0$ and $\frac{d}{dq}\left. \text{Re} [\lambda (q) \right]|_{q=q_h}=-1\neq 0$. Hence, all necessary and sufficient condition for Hopf-bifurcation to occur are satisfied.
\end{example}


\begin{figure}[hbt!]
\begin{center}
\subfigure[]{
    \includegraphics[scale=.261]{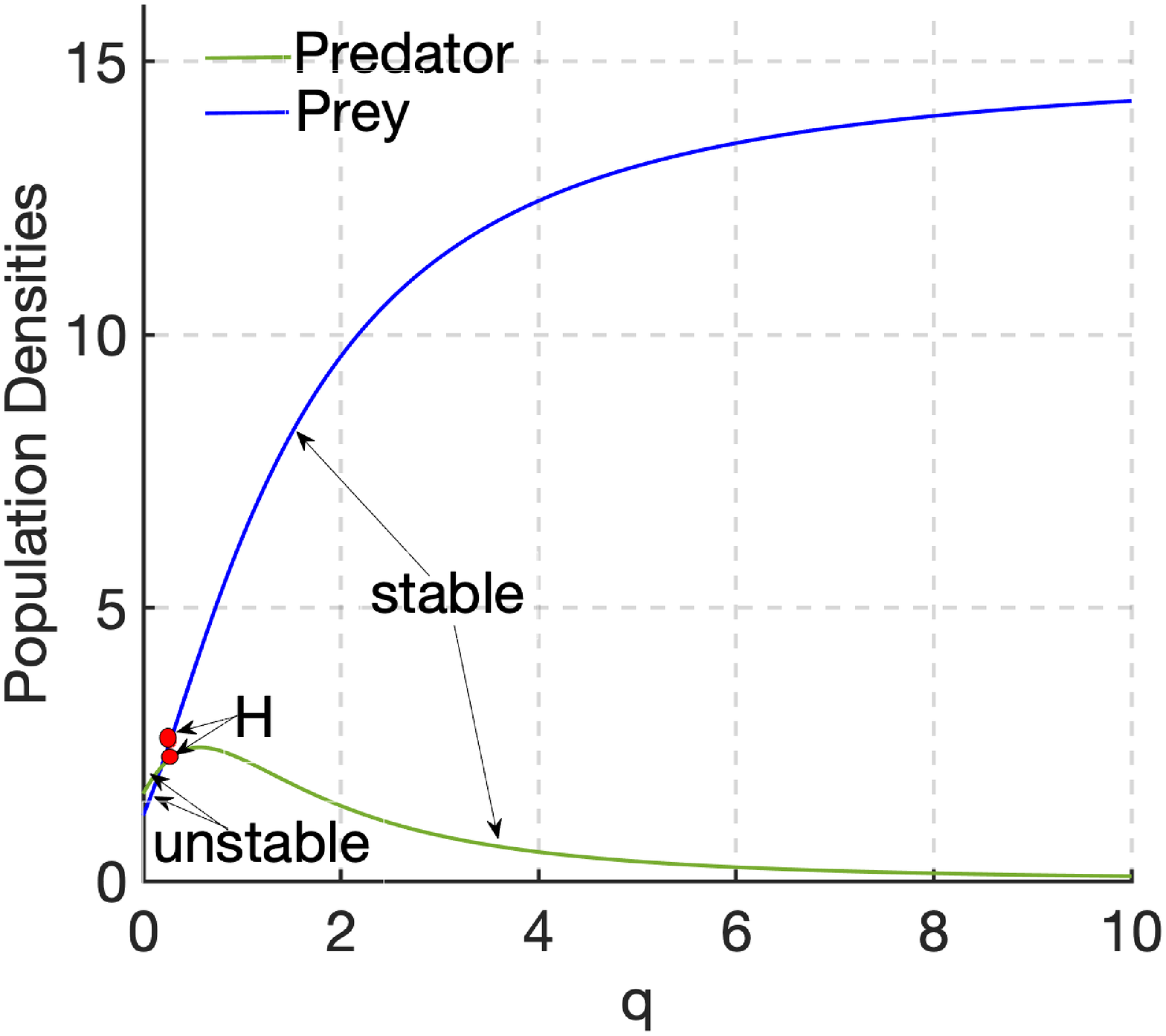}}
\subfigure[]{    
    \includegraphics[scale=.261]{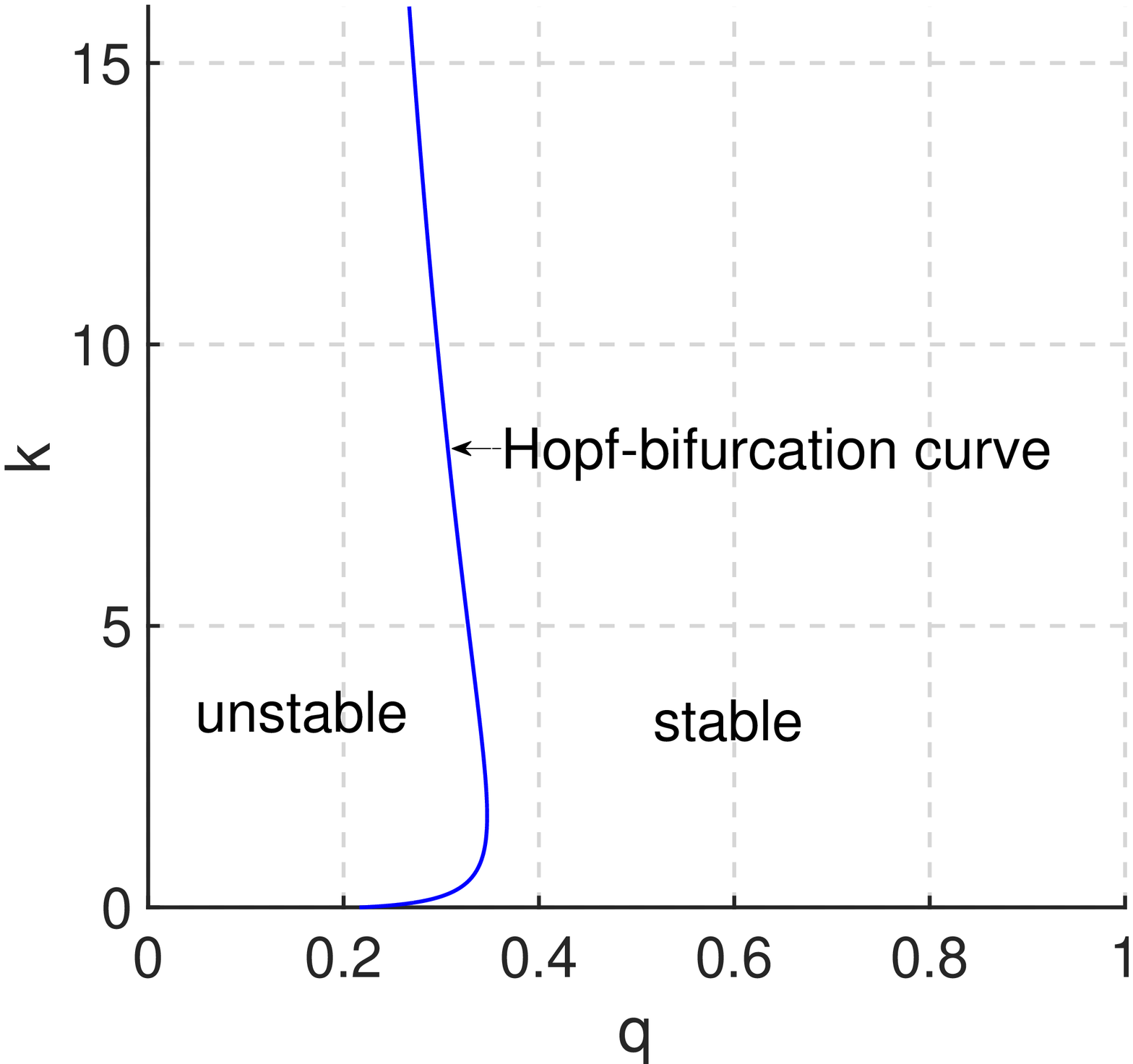}}
\end{center}
\caption{(a) The change in stability of the coexistence equilibrium points as we vary $q$ in the model \eqref{eq:HarvestingEqn}. Hopf-bifurcation occurs at $q=q_h=0.27068$ around $(2.54542,2.24175)$ with first Lyapunov coefficient $L_H=8.38051e^{-03}$. The periodic orbits emitted at the Hopf point is subcritical (b) two-parameter bifurcation diagram in $q-k$ parametric plane. Parameter values used here are from Example \ref{example:Hopf_q}.}
\label{fig:Hopf_HarvestParam_q}
\end{figure}

We now state two conjectures concerning the effect of effort harvesting of predators on the dynamics of model \eqref{eq:HarvestingEqn}.
\begin{conjecture}[Existence of Homoclinic Bifurcation]\label{thm:Harvest-Homoclinic}
Consider the model \eqref{eq:HarvestingEqn} where all  parameters are fixed except $q\geq 0$. There exists $q^*>0$ such that a homoclinic loop occurs when $q=q^*$.
\end{conjecture}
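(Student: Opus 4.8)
The plan is to establish the existence of a homoclinic orbit to the saddle $E_0$ by a continuity/intermediate-value argument in the parameter $q$, mirroring the structure already used for Conjecture \ref{conjecture:Homoclinic_k}. First I would fix all parameters so that, at $q=0$, the model \eqref{eq:HarvestingEqn} reduces to \eqref{EquationMain} in a regime where the stable manifold $W^s(E_0)$ lies strictly below the unstable manifold $W^u(E_0)$ emanating into $\mathbb{R}_{++}^2$ (the configuration of Fig. \ref{fig:HomoclinicMain_Curve}(a), where $E_2$ is unstable). The key monotonicity input is that increasing $q$ strictly decreases the predator growth rate via the term $-qv^r$, which raises the predator nullcline's effective level, shrinks the region enclosed by the unstable branch, and — one expects — moves $W^u(E_0)$ downward relative to $W^s(E_0)$. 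One would then exhibit a sufficiently large $q = \bar q$ for which $W^u(E_0)$ has been pushed below $W^s(E_0)$ (intuitively, for large harvesting the predator is driven toward extinction and the unstable branch is captured by the prey axis / the attracting coexistence state). Defining a signed "separation function" $\Delta(q)$ measuring the vertical gap between the two manifolds along a fixed transversal section, continuity of $\Delta$ in $q$ (from smooth dependence of invariant manifolds of the hyperbolic saddle on parameters, away from the non-smooth axes) together with the sign change $\Delta(0)>0 > \Delta(\bar q)$ yields a $q^* \in (0,\bar q)$ with $\Delta(q^*)=0$, i.e. $W^s(E_0)=W^u(E_0)$, which is precisely a homoclinic loop.

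The steps, in order, would be: (1) verify that $E_0$ is a saddle for the harvested system and that its local stable and unstable manifolds are well-defined and vary continuously (indeed smoothly) with $q$ on a neighborhood bounded away from $\{u=0\}\cup\{v=0\}$ where the right-hand side is smooth; (2) fix a parameter set and a $q$-independent cross-section $\Sigma$ transverse to both manifold branches, and define $\Delta(q)$ as the difference of the first intersection coordinates of $W^u(E_0)$ and $W^s(E_0)$ with $\Sigma$; (3) show $\Delta(0)>0$ using the assumed initial configuration; (4) show $\Delta(\bar q)<0$ for $\bar q$ large by a comparison/trapping argument — for large $q$ the $v$-equation satisfies $dv/dt \le -dv - q v^r + e u^p v^m$, whose sign structure forces the unstable trajectory into a region below $W^s(E_0)$, e.g. by constructing an invariant trapping region beneath $W^s(E_0)$; (5) invoke the intermediate value theorem on the continuous function $\Delta$ to produce $q^*$; (6) argue that $\Delta(q^*)=0$ genuinely corresponds to a homoclinic connection rather than a degenerate coincidence, using that along $\Sigma$ coincidence of the two branches at one point forces the whole forward orbit of that point to lie on $W^s(E_0)$ and its backward orbit on $W^u(E_0)$.

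The main obstacle is step (4) and, relatedly, the rigorous control of the global behavior of $W^u(E_0)$ as $q$ varies: while local manifolds depend smoothly on $q$, one must rule out that the unstable branch escapes to infinity or is absorbed by a limit cycle (created by the Hopf bifurcation of Theorem \ref{thm:H-bifurcation_Harvest}) before it can be compared with $W^s(E_0)$ — this is exactly why the statement is offered as a conjecture rather than a theorem. A secondary subtlety is that the vector field is non-smooth at $E_0$ itself (since $0<p,m<1$, and now also possibly at the $v^r$ term when $r<1$), so the standard smooth stable/unstable manifold theorem does not apply verbatim at the origin; as in Remark \ref{remark: on singularity of E0 and E1}, one works with the separatrices produced by $E_0$ rather than with a linearization, and the continuity of these separatrices in $q$ must be argued directly from the flow rather than quoted from a manifold theorem. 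For the purposes of this paper we treat these global-dissipativity and manifold-continuity facts as plausible — consistent with the numerics — and leave a complete proof open.
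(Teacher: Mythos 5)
The paper does not prove this statement: Conjecture~\ref{thm:Harvest-Homoclinic} is offered with numerical support only, namely the phase portraits of Fig.~\ref{fig:Harvest_Recovery_Homoclinic}, which display exactly the scenario you describe ($W^s(E_0)$ below $W^u(E_0)$ at $q=0$, above it at $q=1$, with the loop detected for $0.321<q<0.323$). Your separation-function/intermediate-value program is therefore the natural formalization of the paper's numerical heuristic rather than a competitor to an existing argument, and its overall shape (fix a transversal, define a signed gap $\Delta(q)$, show a sign change, conclude a connection at $\Delta(q^*)=0$) is the reasonable route one would attempt.

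However, as a proof it has genuine gaps, which you partly acknowledge. First, step (1) cannot be obtained from the standard theory: since $0<p,m<1$ (and possibly $r<1$), the Jacobian is singular at $E_0$ (cf.\ Remark~\ref{remark: on singularity of E0 and E1}), $E_0$ is only \emph{assumed} to behave like a saddle, and solutions are non-unique near the axes and in backward time (cf.\ Lemma~\ref{lemma:Nonnegativity} and the concluding remarks); so the two ``separatrices'' must be constructed directly, e.g.\ as the boundary between the basin of $E_2$ and the set of initial data suffering finite-time prey extinction, and their continuity in $q$ must be proved from the flow --- this is the heart of the problem, not a technicality to be deferred. Second, step (4) is asserted rather than established: the differential inequality for $dv/dt$ alone does not yield an invariant trapping region beneath $W^s(E_0)$, and the Hopf bifurcation of Theorem~\ref{thm:H-bifurcation_Harvest} (numerically $q_h\approx 0.27$, just below the observed homoclinic window) means $W^u(E_0)$ may be captured by a periodic orbit, so $\Delta(q)$ need not be well defined or change sign monotonically; the loop could equally arise from a cycle growing into the separatrix, which your scalar crossing argument does not detect. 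A minor slip: increasing $q$ \emph{lowers} the predator nullcline (see the remark following Conjecture~\ref{thm:harvesting induced recovery}), not raises it, although the relative motion of the manifolds you assert does agree with the numerics. In sum, your proposal is a credible program consistent with the paper's evidence, but, like the paper, it leaves the statement at the level of a conjecture.
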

%

\begin{conjecture}[Harvesting-Induced Recovery]\label{thm:harvesting induced recovery}
Consider the predator-prey model given by \eqref{eq:HarvestingEqn}. Then there exists a harvesting effort, $0<c_{1}<q^*<c_{2}$ such that the solution to the prey equation does not go extinct in finite time, and in particular for these levels of predator harvesting the solution can be driven to a coexistence state, if initiated from certain initial conditions.
\end{conjecture}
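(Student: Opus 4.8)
The plan is to exploit the fact that predator harvesting simultaneously (i) depresses the predator density, and hence both the fear pressure $\tfrac{au}{1+kv}$ and the predation loss $cu^{p}v^{m}$ acting on the prey, and (ii) leaves intact a locally asymptotically stable coexistence equilibrium: by Theorem \ref{localstabilityHarvest} (in the regime $r\ge m$, where the equilibrium analysis is available) the harvested system \eqref{eq:HarvestingEqn} still carries an LAS point $E_2$ (or $E_2^2$). Since a point in the basin of such an equilibrium converges to the interior and is therefore eventually bounded away from the $v$-axis, the statement reduces to showing that for $q$ in an interval around $q^*$ the basin of attraction of the stable coexistence point \emph{contains} the initial datum $(u(0),v(0))$ that, at $q=0$, is swept to finite-time prey extinction by Theorem \ref{thm:FDFTE}.

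I would carry this out in four steps. \emph{Step 1:} fix the FDFTE parameter set and datum; since the vector field of \eqref{eq:HarvestingEqn} depends continuously on $q$ on compact subsets of $\mathbb{R}_{++}^2$, the stable coexistence point persists and moves continuously with $q$ near $q^*$. \emph{Step 2 (predator barrier):} from the predator equation with $r\ge m$, $\tfrac{dv}{dt}\le -d(1-m)v-q(r-m)v^{r}+eu^{p}v^{m-1}v$, and the uniform bounds of Lemma \ref{lemma:Boundedness_Dissipative} give $v(t)\le\overline v(q,t)$ with $\overline v$ monotone decreasing in $q$ and bounded. \emph{Step 3 (prey cannot reach the axis):} near the $v$-axis the prey per-capita rate is $\tfrac{a}{1+kv}-bu-cu^{p-1}v^{m}$, and finite-time extinction requires $v$ to remain bounded below while $u\downarrow0$ (so that $\tfrac{du}{dt}\lesssim -cv_0^{m}u^{p}$ with $\int_0 s^{-p}ds<\infty$); the barrier $\overline v(q,t)$, together with the fact that on $\{u=0\}$ one has $\tfrac{dv}{dt}<0$, is used to win this race for $q$ in a window $(c_1,c_2)$, precluding $u\to0$ in finite time. \emph{Step 4 (global/separatrix):} show the separatrix $W^{s}(E_0)$ rises monotonically as $q$ increases through $(c_1,c_2)$ and, by the homoclinic reorganisation of Conjecture \ref{thm:Harvest-Homoclinic} at $q=q^*$, passes above $(u(0),v(0))$, so the datum falls strictly below $W^{s}(E_0)$ and hence (Remark \ref{remark: on singularity of E0 and E1}) in the basin of the stable coexistence point — yielding convergence to a coexistence state and, a fortiori, the absence of finite-time prey extinction, exactly as depicted in Fig. \ref{fig:Harvest_Recovery_Homoclinic}.

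The main obstacle is Step 4. The equilibrium $E_0$ is non-hyperbolic and the right-hand side of \eqref{eq:HarvestingEqn} is non-smooth there (Remark \ref{remark: on singularity of E0 and E1}), so $W^{s}(E_0)$ is not furnished by the classical stable-manifold theorem; one must instead build the separatrix as the boundary of the set of forward orbits reaching the $v$-axis, via a shooting/comparison argument, and then establish its monotone dependence on $q$. This is essentially the content of Conjecture \ref{thm:Harvest-Homoclinic}, so a complete proof of the present statement would have to settle that one first; a secondary technical point is the non-uniqueness of solutions through the axis noted in Lemma \ref{lemma:Nonnegativity}, which forces one to run the barrier argument of Steps 2--3 against every solution branch and to phrase Step 4 for the maximal solution. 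For these reasons we record the result as a conjecture and offer the numerical evidence of Fig. \ref{fig:Harvest_Recovery_Homoclinic} in lieu of a proof.
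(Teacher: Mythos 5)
The statement you were asked about is stated in the paper only as a conjecture: the authors give no proof, offering instead the heuristic that increasing $q$ lowers the predator nullcline and raises the separatrix $W^{s}(E_0)$, together with the numerical evidence of Fig.~\ref{fig:Harvest_Recovery_Homoclinic}, and they explicitly defer a proof to future work. Your proposal follows essentially the same picture (persistence of the LAS coexistence point from Theorem~\ref{localstabilityHarvest}, the datum passing below a separatrix that rises with $q$, the homoclinic reorganisation of Conjecture~\ref{thm:Harvest-Homoclinic}), and you correctly identify the decisive obstruction — the origin is non-hyperbolic and the vector field non-smooth there, so $W^{s}(E_0)$ is not given by the stable-manifold theorem and its monotone dependence on $q$ is unproven — which is precisely why the result remains open; since Step 4 rests on another unproven conjecture, your argument is, as you yourself state, a programme rather than a proof, and in that respect it matches the paper's treatment exactly. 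Two small cautions if you pursue it: Theorem~\ref{localstabilityHarvest} is stated under $r\ge m$ and presupposes existence of the coexistence equilibria (the paper only gives ``guidelines''), so Step 1 needs an existence-and-continuation argument in $q$, not just continuity of the vector field; and continuity of $E_2$ in $q$ by itself does not yield that the fixed initial datum enters its basin — that is carried entirely by the separatrix argument you defer.
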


\begin{remark}
Clearly, increasing the harvesting effort $q$, will cause a lowering of the predator nullcline, bringing down the coexistence equilibrium, see Fig. \ref{fig:Harvest_Recovery_Homoclinic}. For large values of $q$, this could be brought as close to the predator free equilibrium as desired, see Fig. \ref{fig:Hopf_HarvestParam_q}.
\end{remark}

\begin{figure}[!htb]
\begin{center}
\subfigure[]{
    \includegraphics[scale=.46]{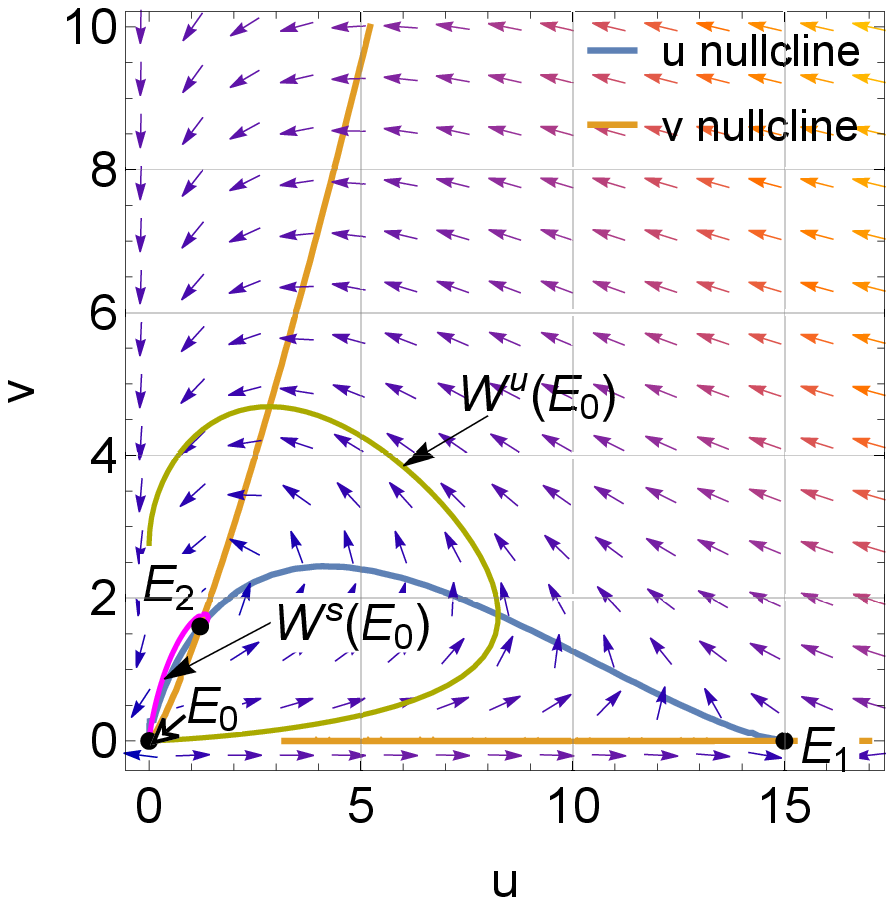}}
\subfigure[]{    
    \includegraphics[scale=.44]{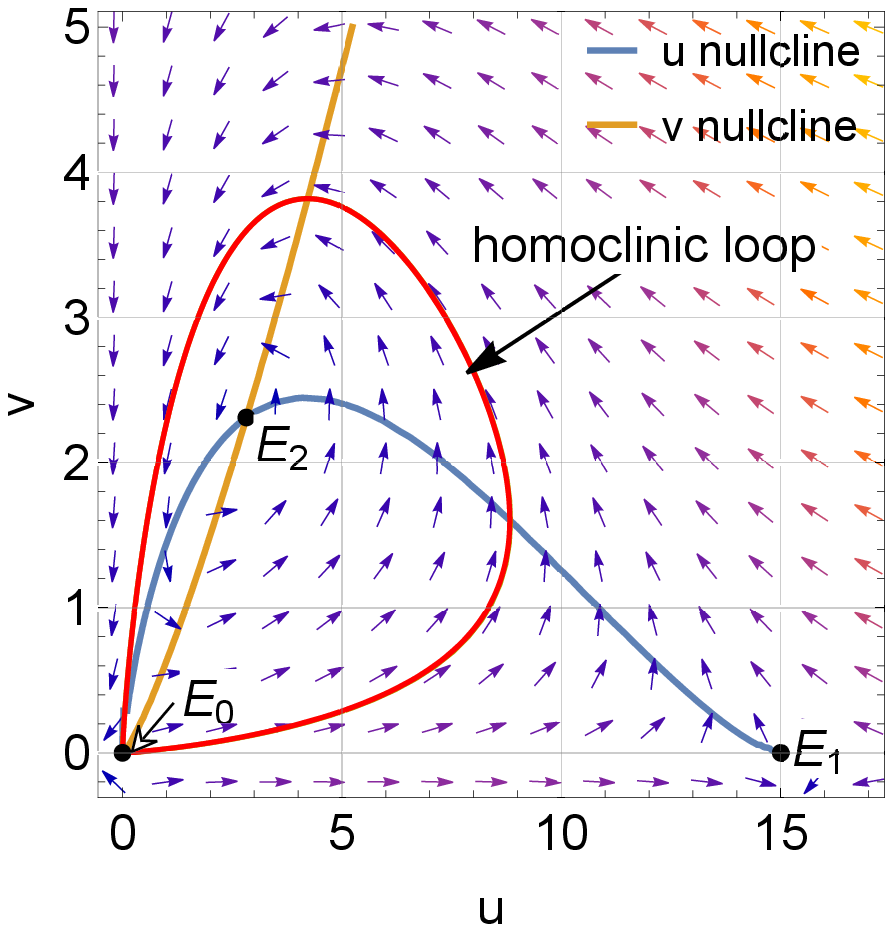}}
\subfigure[]{    
    \includegraphics[scale=.45]{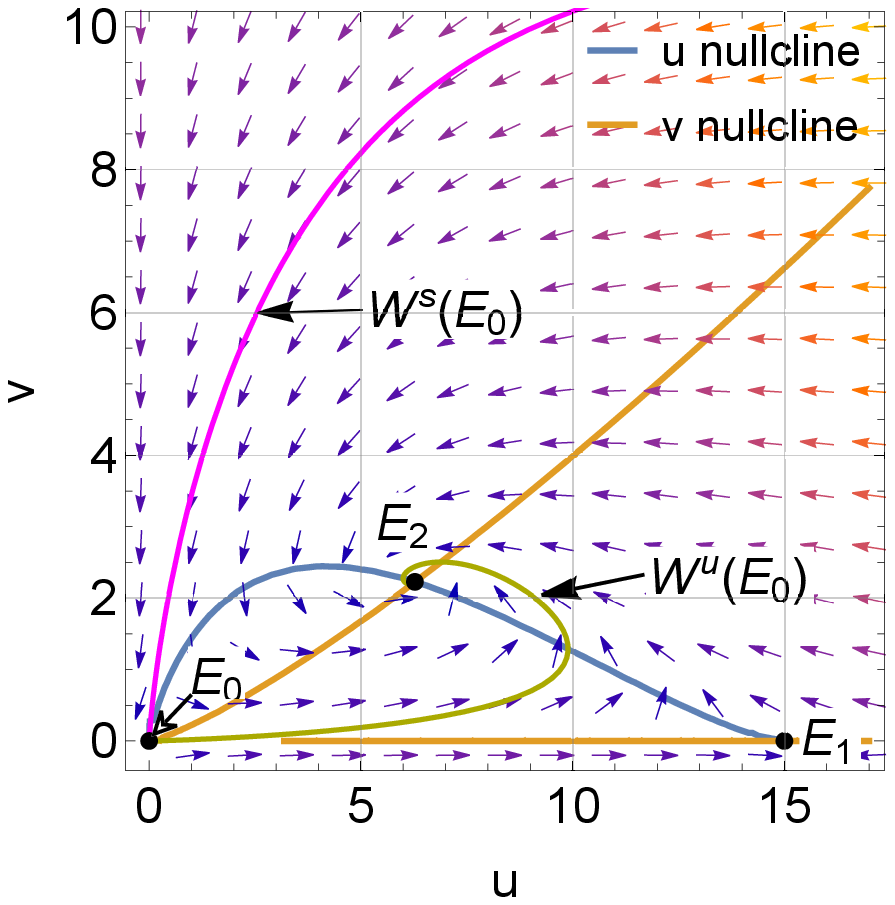}}
\end{center}
\caption{Phase plane portrait of the model \eqref{eq:HarvestingEqn} (a) Stable ($W^s(E_0)$) and unstable ($W^u(E_0)$) manifold with $q=0$, where $W^s(E_0)$ is below $W^u(E_0)$. $E_2$ is unstable and all points are attracted to the predator axis   (b) homoclinic loop when $W^s(E_0)=W^u(E_0)$ for $0.321<q<0.323$. $E_2$ is stable (c) $W^s(E_0)$ is above $W^u(E_0)$ when $q=1$. $E_2$ is stable and all initial data below $W^s(E_0)$ converge to $E_2$ whilst those above are attracted to the predator axis. Parameter values used here are from Example \ref{example:Hopf_q}.}
\label{fig:Harvest_Recovery_Homoclinic}
\end{figure}

%


\section{Conclusions}\label{Sec:Conclusion}
\noindent
In this paper, we have proposed and investigated the rich dynamical behavior of a predator-prey model \eqref{EquationMain}, incorporating the effect of fear, prey herd behavior and mutual interference of predators. The prey herd behavior is governed by a modified Holling type-I functional response that allows for finite time extinction of the prey species due to the non-smoothness at the trivial equilibrium point \cite{RZ71,B12}. The stable manifold ($W^s(E_0)$) of the trivial equilibrium $E_0$ splits the phase plane into two, where solutions with initial conditions above the stable manifold are attracted towards the predator axis in finite time. This posses a problem of non-uniqueness of solutions in backward time.

The fear of predation risk excited by predators can drive a stable prey population species to extinction in finite time, and consequently, to the extinction of the predator population species. To this end please see Theorem  \ref{thm:FDFTE}. We provide numerical justification in Fig. \ref{fig:FDFTE}.
 
Taking the strength of fear of predator $k$ as a bifurcation parameter, we have shown analytically and numerically various local and global bifurcations. We observed from our investigation that fear of predator has the tendency to  stabilize and destabilize a coexistence equilibrium point by producing limit cycles via subcritical Hopf-bifucation.  Biologically, a strong strength of fear can stabilize an unstable coexistence equilibrium of interacting species, see Fig. \ref{fig:Hopf_Main}(a). Also,  with weak strength of fear of predator and certain parameter sets, the stable coexistence between a predator and prey can be destabilized, see Fig. \ref{fig:Hopf_Main}(b). In Fig. \ref{fig:Hopf_Main}, the effects of fear reduces both predator and prey population densities.  From the two-dimensional projections of Hopf-bifurcation curves, we observed a generalized Hopf-bifurcations or Bautin bifurcation which is local and of co-dimension 2 (see Figs. \ref{fig:2param_bifurcation}$(a)-(d)$). 
We obtained saddle-node bifurcations of the model \eqref{EquationMain} as we increase the strength of fear of predator with some appropriate choice of parameters, see Fig. \ref{fig:SN_Main}. There exist a critical strength of fear where the stable and unstable manifold of the trivial equilibrium meet (i.e. homoclinic bifurcation), see Fig. \ref{fig:HomoclinicMain_Curve}(b). This is conjectured in Conjecture \ref{conjecture:Homoclinic_k}. Here we observe that all solutions with initial conditions inside the loop goes to the stable coexistence equilibrium whilst those outside goes to prey extinction in finite time. Hence, these obtained results are interesting and provide further justification that fear of predation risk plays a crucial role in ecosystem balance \cite{PP18}.

We conjecture via Conjecture \ref{thm:harvesting induced recovery} that harvesting of predators can prevent finite time extinction of the prey species and yield persistence of the predator and prey population species. Proving this conjecture would make interesting future work. See Fig. \ref{fig:Harvest_Recovery_Homoclinic}. Additionally, from our numerical simulations in Fig. \ref{fig:Harvest_Recovery_Homoclinic}, when  the unstable manifold of $E_0$ is above the stable manifold of $E_0$ coupled with an unstable coexistence equilibrium point, we observed a homoclinic loop by introducing an external effort dedicated to predator harvesting. Biologically, we are able to stabilize  the predator and prey population species that initiated inside the loop with low harvesting effort. 
 Also, when the external effort dedicated to predator harvesting rate is very high, the prey population species approach its carrying capacity and the predator population species get close to extinction, see Fig. \ref{fig:Hopf_HarvestParam_q}(a). Thus, the predator population species may not survive at a very high harvesting effort. Note, harvesting finds large scale applications in current bio-control applications \cite{LP20}. A full dynamical analysis of \eqref{eq:HarvestingEqn} would make an interesting future endeavor, as the FTE dynamic in predator could counteract with the FTE dynamic in prey, to generate rich dynamical behavior.

Another bio-control application is in pest management where the fear of natural enemy is introduced to drive an invasive pest into extinction. This study should, therefore, prove to be a useful tool in resource management and control. A further interest, which the authors are currently investigating, is the interplay of fear of predator between  aggregating prey  species and  predator interference models, such as those considered in \cite{S97,KRM20, PWB20}.

\section*{Acknowledgements}

\noindent KAF's research is supported by the  Samford Faculty Development Grant (FDG084).
\section*{Appendix}\label{Appendix}
\noindent
We provide numerical simulations to visualize the saddle-node bifurcation of parameters $b,c$ and $d$.
\begin{figure}[!htb]
\begin{center}
\subfigure[]{
    \includegraphics[scale=.171]{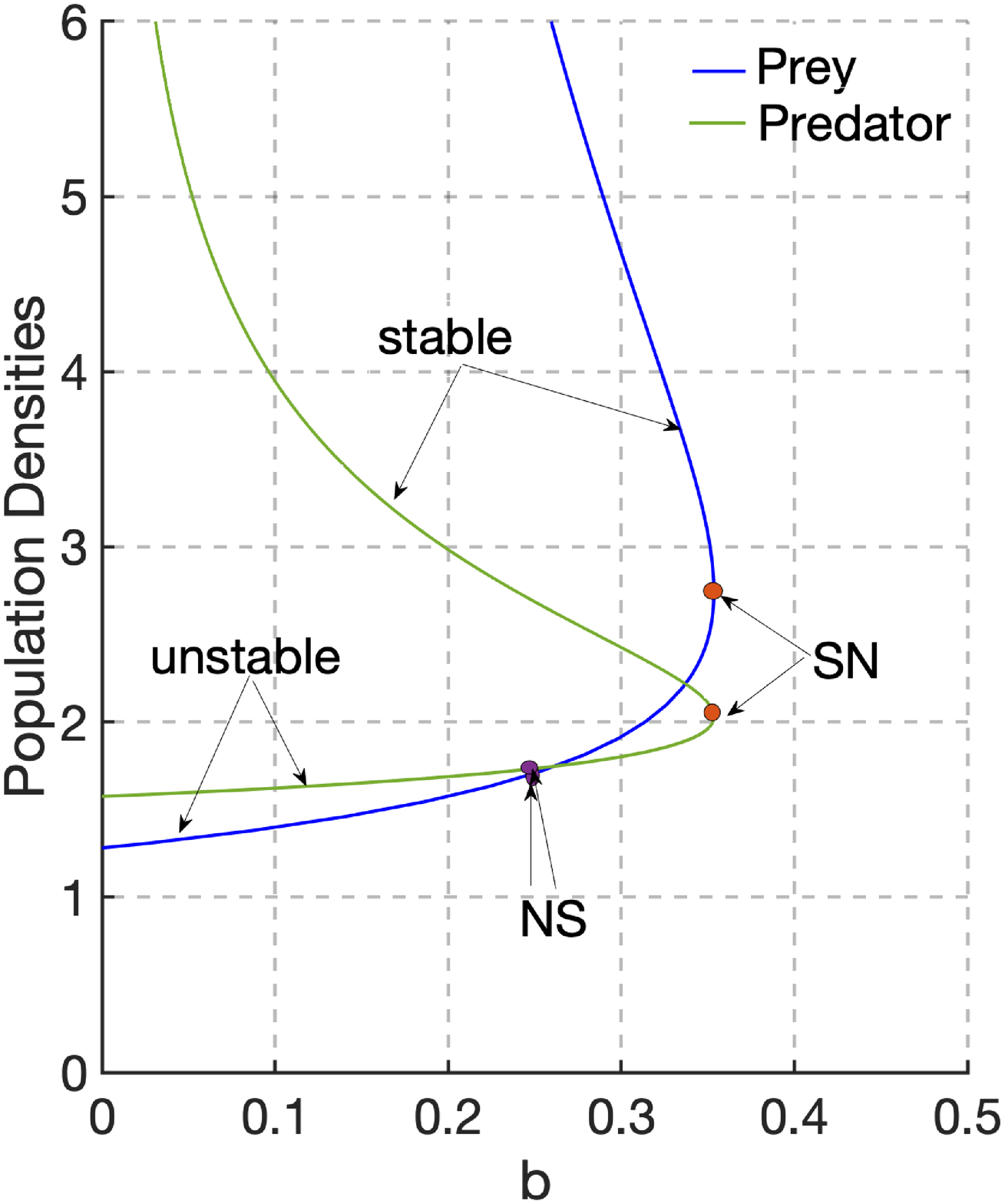}}
\subfigure[]{    
    \includegraphics[scale=.171]{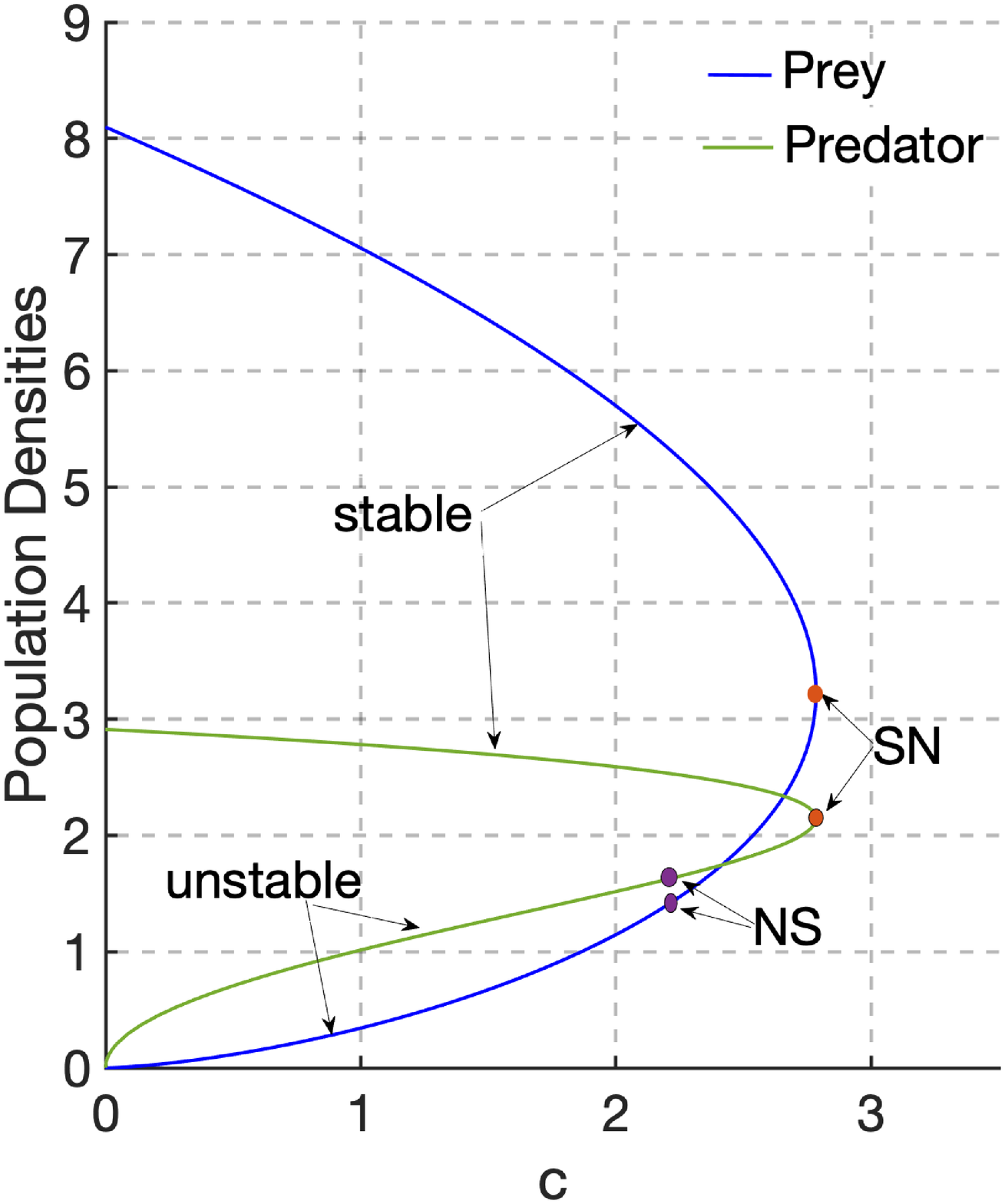}}
\subfigure[]{    
    \includegraphics[scale=.171]{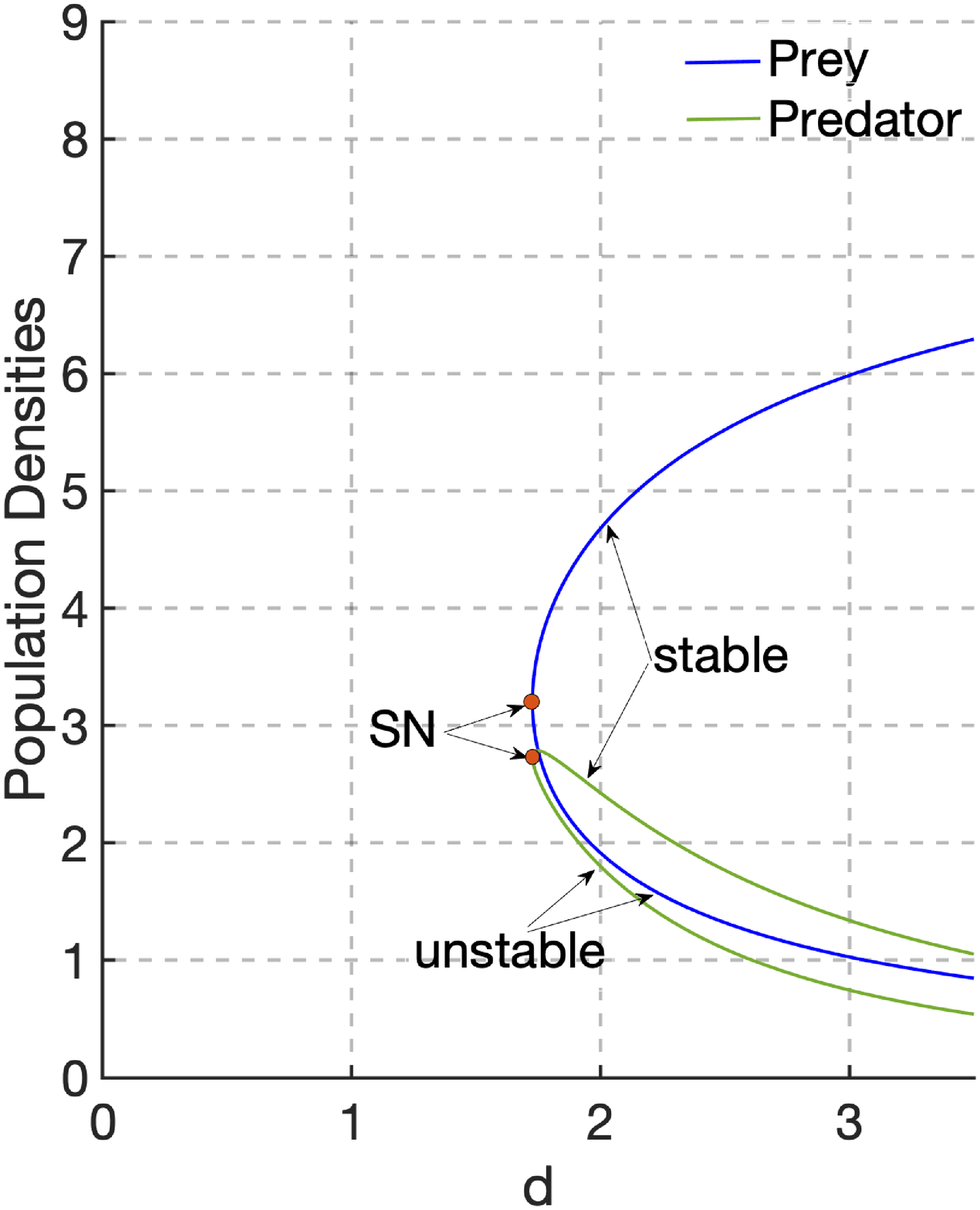}}
\end{center}
\caption{Saddle-node bifurcations of the  model \eqref{EquationMain}  (a)  SN at $b=b_s=0.35355$ and NS at $b=0.24943$  (b) SN at $c=c_s=2.78413$ and NS at $c=2.21579$  (c) SN at $d=d_s=1.72647$. Here, parameters used are given in the caption in Fig. \ref{fig:SN_Main}. (SN: Saddle-node point, NS: Neutral saddle point (not a bifurcation point))}
\label{fig:SN-Appendix}
\end{figure}

\footnotesize


\end{document}